\newcommand{\Q}{\mathbb{Q}}
\newcommand{\Z}{\mathbb{Z}}
\newcommand{\G}{\Gamma}
\newcommand{\la}{\langle}
\newcommand{\ra}{\rangle}
\newcommand{\SL}{\mathrm{SL}}
\newcommand{\GL}{\mathrm{GL}}
\newcommand{\M}{\mathrm{M}}
\newtheorem{thm}{Theorem}[section]
\newtheorem{lem}[thm]{Lemma}
\newtheorem{prop}[thm]{Proposition}
\newtheorem*{thm*}{Theorem}
\theoremstyle{definition}
\newtheorem{rem}[thm]{Remark}
\newtheorem*{convention}{Convention}
\newtheorem*{comments}{Comments}
\newtheorem*{content}{Contents}
\begin{document}
\title{Linear groups - Malcev's theorem and Selberg's lemma}
\author{Bogdan Nica}
\address{Mathematisches Institut, Georg-August Universit\"at G\"ottingen}
\email{bogdan.nica@gmail.com}
\date{June 2013.}
\begin{abstract} These notes give an account of two fundamental facts concerning finitely generated linear groups: Malcev's theorem on residual finiteness, and Selberg's lemma on virtual torsion-freeness.
\end{abstract}
\maketitle

%%%%%%%%%%%%%%%%%%%%%%
\section*{Introduction}
A group is \textbf{linear} if it is (isomorphic to) a subgroup of $\GL_n(K)$, where $K$ is a field. If we want to specify the field, we say that the group is linear over $K$. The following  theorems are fundamental, at least from the perspective of combinatorial group theory.

\begin{thm*}[Malcev 1940] A finitely generated linear group is residually finite.
\end{thm*}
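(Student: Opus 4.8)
The plan is to place $G$ inside $\GL_n$ of a finitely generated commutative \emph{ring}, and then produce finite quotients of $G$ by reducing that ring modulo ideals of finite index.

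First I would fix generators, $G=\la g_1,\dots,g_m\ra\leq\GL_n(K)$, and let $R\subseteq K$ be the subring generated by $1$, by all matrix entries of the $g_i$, and by all matrix entries of the $g_i^{-1}$; by Cramer's rule this amounts to adjoining the entries of the $g_i$ together with the elements $(\det g_i)^{-1}$. Then $R$ is a finitely generated $\Z$-algebra, it is an integral domain (being a subring of the field $K$), and, crucially, $G\leq\GL_n(R)$. Thus it suffices to detect a given $g\in G$, $g\neq I$, in a finite quotient of $G$.

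Next I would turn this into a statement about $R$. Pick an entry $r$ of the matrix $g-I$ with $r\neq 0$. Any ring homomorphism $\varphi\colon R\to F$ induces, entrywise, a ring homomorphism $\M_n(R)\to\M_n(F)$ and hence a group homomorphism $\GL_n(R)\to\GL_n(F)$; if $F$ is finite and $\varphi(r)\neq 0$, then the composite $G\into\GL_n(R)\to\GL_n(F)$ is a homomorphism to a finite group whose value at $g$ differs from $I$ in the position of $r$. So the theorem reduces to the following: \emph{for a finitely generated commutative ring $R$ and a non-nilpotent $r\in R$, there is a ring homomorphism from $R$ to a finite field not vanishing at $r$} --- and in our situation $r$ is automatically non-nilpotent because $R$ is a domain.

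To prove that ring-theoretic fact I would localize: $S=R[1/r]$ is again a finitely generated $\Z$-algebra, and it is nonzero precisely because $r$ is not nilpotent. Pick any maximal ideal $\mathfrak m\subset S$. The one substantial ingredient is the arithmetic form of Hilbert's Nullstellensatz: the residue field $F=S/\mathfrak m$ is \emph{finite}. (In characteristic $p$ this follows from Zariski's lemma, since $F$ is then a finitely generated $\F_p$-algebra which is a field; characteristic zero cannot occur, since $\Q$ is not finitely generated as a ring.) The composite $\varphi\colon R\into S\onto F$ sends $r$ to a unit, so $\varphi(r)\neq 0$, and feeding this into the previous step completes the argument. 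The main obstacle is exactly this Nullstellensatz input --- that maximal ideals of finitely generated $\Z$-algebras have finite residue fields; once it is granted, the rest is routine bookkeeping with matrices over rings.
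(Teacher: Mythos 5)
Your proof is correct and is essentially the paper's own argument: you embed $G$ in $\GL_n(R)$ for a finitely generated domain $R$, and your localization $R[1/r]$ followed by passing to a maximal ideal with finite residue field is exactly the content of Lemma~\ref{technical lemma}(iii)--(iv), which the paper uses in the same way (via the congruence subgroups $\G(\pi)$) to conclude that $\GL_n(A)$ is residually finite. The only compressed point is your parenthetical ruling out characteristic zero, which needs the Artin--Tate style descent down the chain of generators (the quoted ``fact'' in the paper's proof of (iii)) to see that the prime field would itself be finitely generated as a ring; granting that, everything matches.
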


\begin{thm*}[Selberg 1960] A finitely generated linear group over a field of zero characteristic is virtually torsion-free.
\end{thm*}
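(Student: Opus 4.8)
The plan is to realize $\Gamma$ inside $\GL_n$ over a finitely generated integral domain of characteristic zero, reduce modulo two maximal ideals with distinct residue characteristics, and intersect the two resulting congruence subgroups. Concretely, I would first fix an embedding $\Gamma \leq \GL_n(K)$ with $\mathrm{char}\,K = 0$ and a finite generating set $S$ of $\Gamma$, and let $R$ be the subring of $K$ generated over $\Z$ by all matrix entries of the elements of $S \cup S^{-1}$. Then $R$ is a finitely generated $\Z$-algebra, an integral domain (being a subring of $K$), of characteristic zero, and $\Gamma \leq \GL_n(R)$, since the entries of any product of matrices over $R$ again lie in $R$. I would then call on the commutative-algebra facts that also underlie Malcev's theorem: $R$ is Noetherian, and by the Nullstellensatz in its general form every maximal ideal $\mathfrak{q}$ of $R$ has a \emph{finite} residue field $R/\mathfrak{q}$, of some prime characteristic $\mathrm{char}(\mathfrak{q})$. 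This is where --- and the only place where --- the hypothesis $\mathrm{char}\,K = 0$ is used: since $\Z$ embeds in $R$, a routine argument shows that the residue characteristics of $R$ form an infinite set of primes. (If only $p_1, \dots, p_s$ occurred, then $N := p_1 \cdots p_s$ would lie in every maximal ideal of $R$; but $N \neq 0$, so $R[1/N]$ is a nonzero finitely generated $\Z$-algebra, hence has a maximal ideal, which contracts to a maximal ideal of $R$ not containing $N$ --- a contradiction.) I then fix maximal ideals $\mathfrak{m}, \mathfrak{m}'$ of $R$ with $p := \mathrm{char}(\mathfrak{m}) \neq \mathrm{char}(\mathfrak{m}') =: p'$.

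For a maximal ideal $\mathfrak{q}$ of $R$ with residue characteristic $\ell$, I set $\Gamma(\mathfrak{q}) := \ker(\Gamma \to \GL_n(R/\mathfrak{q}))$; since $R/\mathfrak{q}$ is finite, $\GL_n(R/\mathfrak{q})$ is finite and $\Gamma(\mathfrak{q})$ has finite index in $\Gamma$. The key point is that \emph{every torsion element of $\Gamma(\mathfrak{q})$ has order a power of $\ell$}; it suffices to show that any $h \in \Gamma(\mathfrak{q})$ with $h^r = I$ for a prime $r \neq \ell$ equals $I$. For this I would work in the Noetherian local domain $A := R_{\mathfrak{q}}$ with maximal ideal $\mathfrak{n} := \mathfrak{q}A$, where $\bigcap_{k \geq 1} \mathfrak{n}^k = 0$ by Krull's intersection theorem and $h \equiv I \pmod{\mathfrak{n}}$. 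If $h \neq I$, write $h = I + B$ with $0 \neq B \in \M_n(\mathfrak{n})$ and let $j \geq 1$ be the largest integer with $B \in \M_n(\mathfrak{n}^j)$ (finite, by Krull's theorem again). Expanding $h^r = I$ gives $rB = -\sum_{i=2}^{r} \binom{r}{i} B^i$, and the right-hand side lies in $\M_n(\mathfrak{n}^{2j})$; since $r \neq \ell$ we have $r \notin \mathfrak{n}$, so $r$ is a unit of $A$, whence $B \in \M_n(\mathfrak{n}^{2j}) \subseteq \M_n(\mathfrak{n}^{j+1})$, contradicting the maximality of $j$.

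Finally I would conclude: $\Gamma(\mathfrak{m}) \cap \Gamma(\mathfrak{m}')$ has finite index in $\Gamma$, and any torsion element in it has order simultaneously a power of $p$ and a power of $p'$, hence order $1$; so this subgroup is torsion-free and $\Gamma$ is virtually torsion-free. I expect the main obstacle to be the torsion-order claim of the previous paragraph --- in particular the realization that working with \emph{two} primes removes any need to analyse $\ell$-torsion inside $\Gamma(\mathfrak{q})$, and so sidesteps the genuinely delicate case $r = \ell = 2$, where $\Gamma(\mathfrak{q})$ itself may contain $-I$ and thus need not be torsion-free. The remaining inputs (finiteness of residue fields, Krull's intersection theorem, and the abundance of maximal ideals of varying residue characteristic) are the same standard commutative algebra already deployed for Malcev's theorem.
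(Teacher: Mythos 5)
Your argument is correct, and it reaches Selberg's lemma by a genuinely shorter route than the paper. The paper derives Selberg from Platonov's theorem: for all but finitely many primes $p$ it builds the whole descending chain $\G(\pi^k)$, shows each successive quotient is a $p$-group and that $\bigcap_k\G(\pi^k)=\{1_n\}$ by Krull, concluding that $\G(\pi)$ is residually $p$-finite, and then invokes Proposition~\ref{p and q} (two residual primes force torsion-freeness). You bypass the residual property entirely: for a single maximal ideal $\mathfrak{q}$ you show directly that torsion in $\G(\mathfrak{q})$ is $\ell$-primary, by a Minkowski-type ``order of vanishing'' argument in the localization $R_{\mathfrak{q}}$ --- this is exactly the paper's Lemma~\ref{residual p-finiteness of PCS}-adjacent Minkowski argument for $\SL_n(\Z)$ transplanted to a general Noetherian local domain, and it is close in spirit to the valuation-theoretic approach the paper attributes to Cassels and Ratcliffe in the Comments. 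Your observation that using \emph{two} residue characteristics sidesteps the delicate $r=\ell=2$ case (which Minkowski's lemma must confront head-on, since $-1_n\in\G(2)$) is exactly right, and it is the same device the paper encodes abstractly in Proposition~\ref{p and q}. The commutative-algebra inputs are the same in both treatments (Hilbert basis, Krull intersection, finiteness of residue fields of finitely generated rings, and the abundance of residue characteristics in characteristic zero, which is the content of Lemma~\ref{technical lemma}~iv--v); the only step you compress is the contraction of a maximal ideal of $R[1/N]$ back to a maximal ideal of $R$, which requires the finite-residue-field fact again, as in the paper's proof of Lemma~\ref{technical lemma}~iv. What you give up relative to the paper is the stronger conclusion: your argument yields virtual torsion-freeness but not virtual residual $p$-finiteness, hence neither Platonov's theorem nor (without further work) Malcev's.
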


A group is \textbf{residually finite} if its elements are distinguished by the finite quotients of the group, i.e., if each non-trivial element of the group remains non-trivial in a finite quotient. A group is \textbf{virtually torsion-free} if some finite-index subgroup is torsion-free. As a matter of further terminology, Selberg's theorem is usually referred to as Selberg's lemma, and Malcev is alternatively transliterated as Mal'cev or Maltsev.

\begin{content} The main body of this text has three sections. In the first one we discuss residual finiteness and virtual torsion-freeness, with emphasis on their relation to a third property - roughly speaking, a ``$p$-adic'' refinement of residual finiteness. The main theorem we are actually interested in, due to Platonov (1968), gives such refined residual properties for finitely generated linear groups. Both Malcev's theorem and Selberg's lemma are consequences of this more powerful, but lesser known, theorem of Platonov. The second section is devoted to $\SL_n(\Z)$. This example is too important, too interesting, too much fun to receive anything less than a scenic analysis. In the last section we return to a proof of Platonov's theorem. \end{content}

\begin{comments}
Besides the Russian original \cite{P}, the only other source in the literature for Platonov's theorem that I am aware of is the account of Wehrfritz in \cite[Chapter 4]{W}. The proof presented herein is, I think, considerably simpler. It is mainly influenced by the discussion of Malcev's theorem in the lecture notes by Stallings \cite{S}, and it is quite similar to Platonov's own arguments in \cite{P}. 

An alternative road to Selberg's lemma is to use valuations. This is the approach taken by Cassels in \emph{Local fields} (Cambridge University Press 1986), and by Ratcliffe in \emph{Foundations of hyperbolic manifolds} (2nd edition, Springer 2006).

I thank, in chronological order, Andy Putman for a useful answer via \url{mathoverflow.net}, Jean-Fran\c{c}ois Planchat for a careful reading and constructive comments, and Vadim Alekseev for a translation of Platonov's article.
\end{comments}

\begin{convention} In this text, rings are commutative and unital.
\end{convention}
%%%%%%%%%%%%%%%%%%%%%%
\section{Virtual and residual properties of groups}\label{VR}
Virtual torsion-freeness and residual finiteness are instances of the following terminology. Let $\mathcal{P}$ be a group-theoretic property. A group is \textbf{virtually $\mathcal{P}$} if it has a finite-index subgroup enjoying $\mathcal{P}$. A group is \textbf{residually $\mathcal{P}$} if each non-trivial element of the group remains non-trivial in some quotient group enjoying $\mathcal{P}$. The virtually $\mathcal{P}$ groups and the residually $\mathcal{P}$ groups contain the $\mathcal{P}$ groups. It may certainly happen that a property is virtually stable (e.g., finiteness) or residually stable (e.g., torsion-freeness). 

Besides virtual torsion-freeness and residual finiteness, we are interested in the hybrid notion of \textbf{virtual residual $p$-finiteness} where $p$ is a prime. This is obtained by  residualizing the property of being a finite $p$-group, followed by the virtual extension. The notion of virtual residual $p$-finiteness has, in fact, the leading role in this account for it relates both to residual finiteness and to virtual torsion-freeness.

\begin{lem}[``Going down'']
If $\mathcal{P}$ is inherited by subgroups, then both virtually $\mathcal{P}$ and residually $\mathcal{P}$ are inherited by subgroups. In particular, virtual torsion-freeness, residual finiteness, and virtual residual $p$-finiteness are inherited by subgroups.
\end{lem}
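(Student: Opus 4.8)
The plan is to handle the two assertions in parallel, in each case pushing the hypothesis on $G$ down to a subgroup $H \leq G$ by intersecting, respectively restricting, along the inclusion $H \into G$. So fix $H \leq G$. Suppose first that $G$ is virtually $\mathcal{P}$, witnessed by a finite-index subgroup $N \leq G$ with property $\mathcal{P}$. I would check that $H \cap N$ has finite index in $H$: the assignment $(H \cap N)h \mapsto Nh$ is a well-defined injection of the right coset space $H/(H \cap N)$ into $G/N$, so $[H : H \cap N] \leq [G : N] < \infty$. Since $H \cap N$ is a subgroup of $N$ and $\mathcal{P}$ passes to subgroups by assumption, $H \cap N$ enjoys $\mathcal{P}$; hence $H$ is virtually $\mathcal{P}$.

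Suppose next that $G$ is residually $\mathcal{P}$, and let $h \in H$ with $h \neq 1$. Viewing $h$ as an element of $G$, there is a quotient homomorphism $q \colon G \onto Q$ with $Q$ enjoying $\mathcal{P}$ and $q(h) \neq 1$. Restricting $q$ to $H$ yields a homomorphism of $H$ onto the subgroup $q(H) \leq Q$, which therefore enjoys $\mathcal{P}$, and $h$ still survives since $q(h) \neq 1$. As $h$ was an arbitrary non-trivial element of $H$, this shows $H$ is residually $\mathcal{P}$.

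For the final sentence: torsion-freeness and finiteness are each inherited by subgroups, so the two claims just proved give virtual torsion-freeness and residual finiteness for subgroups directly. For virtual residual $p$-finiteness, observe that being a finite $p$-group is inherited by subgroups; the residual half of the lemma then shows residual $p$-finiteness is inherited by subgroups, and feeding this property into the virtual half of the lemma shows virtual residual $p$-finiteness is inherited by subgroups.

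The only step requiring any care is the index bound $[H : H \cap N] \leq [G : N]$ — a short but genuine coset computation; the residual statement and the deductions in the last paragraph are immediate from the definitions.
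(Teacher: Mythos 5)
Your proof is correct and complete; the paper itself states this lemma without proof (it is treated as a standard exercise), so there is nothing to compare against. Your two arguments are exactly the standard ones: for the virtual case, intersecting with a finite-index $\mathcal{P}$-subgroup $N$ and using $[H : H\cap N]\leq [G:N]$ via the injection of coset spaces, plus $H\cap N\leq N$; for the residual case, restricting the separating quotient $q\colon G\onto Q$ to $H$ and noting $q(H)\leq Q$ inherits $\mathcal{P}$. The final paragraph's chaining (Lagrange gives that finite $p$-groups pass to subgroups, hence residually $p$-finite does, hence virtually residually $p$-finite does) is the right way to read the ``in particular'' clause.
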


\begin{lem}[``Going up'']\label{going up}
Virtually $\mathcal{P}$ passes to finite-index supergroups. In particular, both virtual torsion-freeness and virtual residual $p$-finiteness pass to finite-index supergroups. Residual finiteness passes to finite-index supergroups.
\end{lem}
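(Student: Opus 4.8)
The plan is to settle the first two assertions by index arithmetic alone and to reserve the real argument for residual finiteness. Suppose $H\le G$ with $[G:H]$ finite and $H$ is virtually $\mathcal{P}$, say $K\le H$ is a finite-index subgroup with property $\mathcal{P}$. Since $[G:K]=[G:H]\cdot[H:K]$ is finite and $K$ retains $\mathcal{P}$, the subgroup $K$ witnesses that $G$ is virtually $\mathcal{P}$; this proves the first sentence, and the ``in particular'' clause is just the instances where $\mathcal{P}$ is torsion-freeness, resp.\ residual $p$-finiteness. I would stress that this argument does not touch residual finiteness: ``residually finite'' is a residual, not a virtual, property, so the first assertion says nothing about it — and in fact residual $p$-finiteness genuinely fails to pass to finite-index supergroups (e.g.\ the index-$2$ inclusion $\Z/3\le S_3$, whose only finite $3$-group quotient is trivial), so the last sentence of the lemma really is special to finiteness.

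For residual finiteness the tool I would invoke is the \emph{normal core}: any finite-index subgroup $L\le G$ contains $L_G:=\bigcap_{x\in G}xLx^{-1}$, which is normal in $G$ and still of finite index, being the kernel of the action of $G$ on the finite coset set $G/L$ (an action with finite image in $\mathrm{Sym}(G/L)$). The upshot is a convenient reformulation: $G$ is residually finite as soon as every non-trivial $g\in G$ lies outside \emph{some} finite-index subgroup of $G$, since passing from such a subgroup to its normal core turns it into an honest finite quotient of $G$ in which $g$ survives.

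It remains to produce, for $H\le G$ of finite index with $H$ residually finite and for each $g\in G$ with $g\neq 1$, a finite-index subgroup of $G$ avoiding $g$. I would split on whether $g$ lies in $H$. If $g\notin H$, then $H$ is already such a subgroup. If $g\in H$, then residual finiteness of $H$ supplies a finite-index (normal-in-$H$) subgroup $K\le H$ with $g\notin K$, and $[G:K]=[G:H]\cdot[H:K]$ is finite, so $K$ does the job. By the reformulation above, this completes the proof. There is no real obstacle here; the one step worth isolating is the normal-core passage, which is exactly what upgrades ``$g$ is avoided by a small subgroup'' to ``$g$ is detected by a small quotient''.
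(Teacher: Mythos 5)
Your proof is correct and follows the same route the paper implicitly takes: index multiplication for the virtual part, and the reformulation ``$G$ is residually finite iff every non-trivial element avoids some finite-index subgroup'' (justified via normal cores) for the residual-finiteness part, which is exactly the equivalence the paper states in the paragraph following the lemma. Your aside on $\Z/3\leq S_3$ is a fine concrete witness for the paper's remark that residual $p$-finiteness is not virtually stable.
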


Observe that residual $p$-finiteness, just like torsion-freeness, is not virtually stable. Residual finiteness does pass to finite-index supergroups because of the following equivalent description: a group is residually finite if and only if every non-trivial element lies outside of a finite-index subgroup.

Residual $p$-finiteness trivially implies residual finiteness. Going up, we obtain:

\begin{prop}\label{one p suffices}
Virtual residual $p$-finiteness for some prime $p$ implies residual finiteness.
\end{prop}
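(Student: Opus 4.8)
The plan is to chain together two facts that are already available: that residual $p$-finiteness is a special case of residual finiteness, and that residual finiteness is inherited by finite-index supergroups. So let $G$ be a group that is virtually residually $p$-finite for some prime $p$. Unwinding the definition of the ``virtually'' operation, $G$ contains a finite-index subgroup $H$ that is residually $p$-finite.

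The first step is to note that $H$ is then residually finite: a finite $p$-group is in particular a finite group, so any quotient of $H$ that witnesses the non-triviality of a given element of $H$ via a finite $p$-group is a fortiori a finite quotient; hence residual $p$-finiteness of $H$ upgrades at once to residual finiteness of $H$. (This is the observation ``residual $p$-finiteness trivially implies residual finiteness'' made just before the proposition.) The second step is to invoke the ``Going up'' lemma (Lemma~\ref{going up}), which says precisely that residual finiteness passes to finite-index supergroups. Applying it to the finite-index inclusion $H \le G$ yields that $G$ is residually finite, which is what we want.

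There is no real obstacle, but the one step that is not purely formal is the last one: residual finiteness, unlike virtual torsion-freeness, is \emph{not} virtually stable in the naive sense, so one genuinely needs the going-up statement for residual finiteness rather than a one-line argument. That going-up statement itself relies on the reformulation, noted in the text, that a group is residually finite exactly when every non-trivial element lies outside some finite-index subgroup — granting this, the proof is complete.
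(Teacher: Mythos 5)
Your proposal is correct and is exactly the paper's argument: the paper's proof consists of the remark that residual $p$-finiteness trivially implies residual finiteness, followed by the ``Going up'' lemma for residual finiteness (which rests on the reformulation via finite-index subgroups, just as you note). Nothing to add.
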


On the other hand, residual $p$-finiteness imposes torsion restrictions. Namely, in a residually $p$-finite group, the order of a torsion element must be a $p$-th power. Hence, if a group is residually $p$-finite and residually $q$-finite for two different primes $p$ and $q$, then it is torsion-free. Virtualizing this statement, we obtain:

\begin{prop}\label{p and q}
Virtual residual $p$-finiteness and virtual residual $q$-finiteness for two different primes $p$ and $q$ imply virtual torsion-freeness.
\end{prop}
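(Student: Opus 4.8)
The plan is to funnel both virtual hypotheses into a single finite-index subgroup and then quote the torsion restriction already recorded in the text. Concretely, suppose $G$ is virtually residually $p$-finite and virtually residually $q$-finite. Then there is a finite-index subgroup $H \leq G$ that is residually $p$-finite and a finite-index subgroup $K \leq G$ that is residually $q$-finite. The candidate finite-index torsion-free subgroup is $H \cap K$, which indeed has finite index in $G$ since $H$ and $K$ do (for instance, because $[G : H \cap K] \leq [G:H]\,[G:K]$).

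Next I would observe that $H \cap K$ inherits both residual properties. A subgroup of a finite $p$-group is again a finite $p$-group, so the ``Going down'' lemma tells us that residual $p$-finiteness is inherited by subgroups; applied to $H \cap K \leq H$ this gives that $H \cap K$ is residually $p$-finite. By the symmetric argument with $q$ in place of $p$ and $K$ in place of $H$, the subgroup $H \cap K$ is also residually $q$-finite. At this point we are exactly in the situation treated just before the statement: a group that is simultaneously residually $p$-finite and residually $q$-finite for two distinct primes $p \neq q$ is torsion-free, since the order of any torsion element would be forced to be both a power of $p$ and a power of $q$, hence $1$. Thus $H \cap K$ is torsion-free, and being of finite index in $G$, it witnesses that $G$ is virtually torsion-free.

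There is no real obstacle here: the proof is a short exercise in the ``going up/going down'' formalism, and the single genuinely non-formal ingredient — that torsion elements of a residually $p$-finite group have $p$-power order — has already been supplied in the preceding discussion. The only point that requires a moment's care is to intersect the two finite-index subgroups \emph{before} drawing the torsion-free conclusion; arguing with $H$ and $K$ separately would not work, as neither of them need be torsion-free on its own.
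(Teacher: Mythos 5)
Your proof is correct and is precisely the ``virtualizing'' argument the paper leaves implicit: intersect the two finite-index subgroups, invoke the Going-down lemma so that the intersection inherits both residual properties, then apply the non-virtual torsion-free conclusion stated just before the proposition. You have spelled out exactly what the author compresses into the phrase ``Virtualizing this statement, we obtain.''
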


In light of Propositions~\ref{one p suffices} and ~\ref{p and q}, we see that Malcev's theorem and Selberg's lemma are consequences of the following:

\begin{thm*}[Platonov 1968] Let $G$ be a finitely generated linear group over a field $K$. If $\mathrm{char}\:K=0$, then $G$ is virtually residually $p$-finite for all but finitely many primes $p$. If $\mathrm{char}\:K=p$, then $G$ is virtually residually $p$-finite.
\end{thm*}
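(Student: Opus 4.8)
The plan is to run Malcev's classical argument — reduce to a finitely generated coefficient ring and exploit congruence quotients — while keeping careful track of the prime, so as to land in finite $p$-groups rather than merely finite groups. Writing $G=\la g_1,\dots,g_k\ra\le\GL_n(K)$, let $R\subseteq K$ be the subring generated by $1$ and the matrix entries of $g_1^{\pm1},\dots,g_k^{\pm1}$, so that $G\le\GL_n(R)$. Then $R$ is finitely generated as an algebra over the prime subring of $K$ — a finitely generated $\Z$-algebra if $\mathrm{char}\,K=0$, a finitely generated $\F_p$-algebra if $\mathrm{char}\,K=p$ — and, being a subring of the field $K$, it is a Noetherian integral domain.

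Next I would fix a maximal ideal $\mathfrak{m}\subset R$ and pass to the principal congruence subgroup $G(\mathfrak{m}):=\ker\bigl(G\To\GL_n(R/\mathfrak{m})\bigr)$. By the general Nullstellensatz, a finitely generated $\Z$-algebra that is a field is finite; hence $R/\mathfrak{m}$ is a finite field, of characteristic some prime $\ell$, and $R/\mathfrak{m}^j$ is a finite ring for every $j$ (its layers $\mathfrak{m}^i/\mathfrak{m}^{i+1}$ are finitely generated modules over the finite ring $R/\mathfrak{m}$). In particular $\GL_n(R/\mathfrak{m})$ is finite, so $G(\mathfrak{m})$ has finite index in $G$. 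I claim $G(\mathfrak{m})$ is residually $\ell$-finite. The kernel $\Gamma_j:=\ker\bigl(\GL_n(R/\mathfrak{m}^j)\To\GL_n(R/\mathfrak{m})\bigr)$ is a finite $\ell$-group: it is filtered by the subgroups $\{I+M:M\in\M_n(\mathfrak{m}^i/\mathfrak{m}^j)\}$ for $1\le i\le j$, and because $(I+M)(I+M')\equiv I+M+M'$ modulo $\mathfrak{m}^{i+1}$ whenever $M,M'\in\M_n(\mathfrak{m}^i)$, each successive quotient embeds into the finite elementary abelian $\ell$-group $(\mathfrak{m}^i/\mathfrak{m}^{i+1})^{n^2}$. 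Given $1\ne g=I+M\in G(\mathfrak{m})$ with $0\ne M\in\M_n(\mathfrak{m})$, Krull's intersection theorem gives $\bigcap_j\mathfrak{m}^j=0$ in the Noetherian domain $R$, so some entry of $M$ is nonzero in $R/\mathfrak{m}^j$ for $j$ large; then $g$ has nontrivial image in the finite $\ell$-group $\Gamma_j$ under $G(\mathfrak{m})\To\Gamma_j$. This proves the claim (equivalently: $G(\mathfrak{m})$ embeds into the first congruence subgroup of $\GL_n$ over the $\mathfrak{m}$-adic completion of $R$, which is a pro-$\ell$ group), so $G$ is virtually residually $\ell$-finite.

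It remains to see which primes $\ell$ arise. If $\mathrm{char}\,K=p$ then $R$ is an $\F_p$-algebra, so $\ell=p$ for every maximal ideal, and the construction above gives virtual residual $p$-finiteness directly. If $\mathrm{char}\,K=0$, I would use the standard fact that a finitely generated $\Z$-algebra which is a domain of characteristic zero has only finitely many exceptional primes: there is a nonzero integer $d$ with $R/\ell R\ne0$ for every prime $\ell\nmid d$ (for instance by Noether normalization of $R\otimes_\Z\Q$ followed by clearing denominators, or by Chevalley's theorem applied to $\mathrm{Spec}\,R\To\mathrm{Spec}\,\Z$, whose image contains the generic point since $R$ has characteristic zero, hence contains a dense open). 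For such an $\ell$, any maximal ideal $\mathfrak{m}\supseteq\ell R$ has $R/\mathfrak{m}$ a finite field of characteristic $\ell$, so the previous paragraph shows $G$ is virtually residually $\ell$-finite — for all primes $\ell$ outside the finite set of divisors of $d$.

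I expect the main obstacle to be the commutative-algebra input, not the group theory. One really needs that residue fields of $R$ at maximal ideals are finite — this is what upgrades the congruence quotients from $\ell$-torsion groups to \emph{finite} $\ell$-groups — and, in characteristic zero, that almost every rational prime is a residue characteristic of $R$. The remaining ingredients (the congruence filtration, Krull's intersection theorem, and the trivial passage from the finite-index subgroup $G(\mathfrak{m})$ to $G$) are routine once the coefficient ring $R$ has been pinned down.
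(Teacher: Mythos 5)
Your proposal is correct and follows essentially the same route as the paper: reduce to a finitely generated domain $R$ with $G\le\GL_n(R)$, pick a maximal ideal $\mathfrak m$, use finiteness of the residue field and Krull's intersection theorem, and separate nontrivial elements of $G(\mathfrak m)$ in the finite congruence quotients $\ker(\GL_n(R/\mathfrak m^j)\to\GL_n(R/\mathfrak m))$, which are $p$-groups; the only cosmetic difference is that you exhibit the $p$-group structure via the elementary-abelian layers $\M_n(\mathfrak m^i/\mathfrak m^{i+1})$, where the paper checks directly that each layer of $\{\G(\pi^k)\}$ has exponent $p$. Your handling of the ``all but finitely many primes'' step (Noether normalization / Chevalley for the image of $\operatorname{Spec}R\to\operatorname{Spec}\Z$) matches the paper's Lemma 3.2(v), which is proved via Noether normalization and integrality.
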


%%%%%%%%%%%%%%%%%%%%%%
\section{Essellennzee}\label{SL}
In this section we examine $\SL_n(\Z)$, where $n\geq 2$. We start with a most familiar fact.

\begin{prop}\label{SLn f.g.}
$\SL_n(\Z)$ is generated by the elementary matrices $\{1_n+e_{ij}: i\neq j\}$.
\end{prop}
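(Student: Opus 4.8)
The plan is to use the Euclidean algorithm to reduce an arbitrary matrix $A \in \SL_n(\Z)$ to the identity by left and right multiplication by elementary matrices $1_n + t e_{ij}$ with $t \in \Z$; since these matrices and their inverses are themselves elementary (as $(1_n + t e_{ij})^{-1} = 1_n - t e_{ij}$ for $i \neq j$), this shows $A$ lies in the subgroup they generate. First I would record the effect of multiplying by $1_n + t e_{ij}$: on the left it adds $t$ times row $j$ to row $i$; on the right it adds $t$ times column $i$ to column $j$. These are exactly the integer row and column operations one needs.

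The core is a descent argument on the first column. Given $A \in \SL_n(\Z)$, look at the entries $a_{11}, a_{21}, \dots, a_{n1}$ of its first column; they are not all zero since $\det A = 1$. If some $a_{i1}$ with $i \neq 1$ is nonzero and $|a_{i1}| \le |a_{11}|$ (or if $a_{11} = 0$), use a row operation to replace $a_{11}$ by $a_{11} - q\, a_{i1}$ for a suitable $q \in \Z$, strictly decreasing $|a_{11}|$ or moving a nonzero entry into the $(1,1)$ slot; after finitely many such steps we reach a matrix whose $(1,1)$ entry divides every other entry of the first column. Then further row operations $R_i \mapsto R_i - (a_{i1}/a_{11}) R_1$ clear the rest of the first column, and symmetric column operations clear the rest of the first row. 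The resulting matrix is block-diagonal, $\mathrm{diag}(a_{11}, A')$, and since the determinant is still $1$ and $a_{11}$ now divides everything (in particular $a_{11} = \pm 1$; and with a little more care one forces $a_{11}=1$), we have $A' \in \SL_{n-1}(\Z)$. I would then induct on $n$, the base case $n = 1$ being trivial, to reduce $A'$ — hence the whole matrix — to the identity.

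One bookkeeping point deserves attention: after the reduction the $(1,1)$ entry is $\pm 1$, and if it is $-1$ we must fix the sign using only elementary operations. This is handled by the standard trick that $\mathrm{diag}(-1,-1)$ (embedded in the top-left $2\times 2$ block, with identity elsewhere) is a product of elementaries — concretely one checks $\bigl(\begin{smallmatrix} 1 & 1 \\ 0 & 1\end{smallmatrix}\bigr)\bigl(\begin{smallmatrix} 1 & 0 \\ -1 & 1\end{smallmatrix}\bigr)\bigl(\begin{smallmatrix} 1 & 1 \\ 0 & 1\end{smallmatrix}\bigr) = \bigl(\begin{smallmatrix} 0 & 1 \\ -1 & 0\end{smallmatrix}\bigr)$, whose square is $\mathrm{diag}(-1,-1)$ — so a pair of sign errors can always be absorbed, and the total sign being $+1$ (from $\det = 1$) guarantees they come in pairs.

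The main obstacle is not any single deep idea but organizing the descent cleanly: one must be careful that the row/column operations used are genuinely of the allowed form $1_n + t e_{ij}$ with $i \neq j$ (no scaling of rows is permitted, unlike over a field), and that the induction hypothesis is applied to a bona fide element of $\SL_{n-1}(\Z)$. I expect the write-up to be short once the Euclidean descent on the first column is stated precisely and the sign issue is dispatched as above.
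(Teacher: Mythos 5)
Your proposal is correct and follows essentially the same route as the paper: Euclidean descent via elementary row/column operations to reach a (block-)diagonal matrix, followed by an explicit product of elementaries to absorb the residual unit on the diagonal --- your $\bigl(\begin{smallmatrix} 0 & 1 \\ -1 & 0\end{smallmatrix}\bigr)^2=\mathrm{diag}(-1,-1)$ identity is the case $a=b=-1$ of the paper's $\mathrm{diag}(a,b)\leadsto\mathrm{diag}(1,ab)$ transition, which the paper runs over a general Euclidean domain before diagonalizing fully and sweeping the units down the diagonal. The only step you leave implicit is passing from the full set $\{1_n+t\,e_{ij}: t\in\Z,\ i\neq j\}$ to the stated generators $\{1_n+e_{ij}\}$, which is just $1_n+t\,e_{ij}=(1_n+e_{ij})^{t}$.
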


\begin{proof}
In general, if $A$ is a euclidean domain then $\SL_n(A)$ is generated by the elementary matrices $\{1_n+a\cdot e_{ij}:  a\in A, i\neq j\}$. The first step is to turn any matrix in $\SL_n(A)$ into a diagonal one via elementary operations. Using division with remainder, and the euclidean map on $A$ as a way of measuring the decrease in complexity, we can insure that a single non-zero entry, say $a$, remains in the first row. Column swapping brings $a$ in position $(1,1)$, and row reductions using the invertible $a$ turn all other entries in the first column to $0$. Now repeat this procedure for the remaining $(n-1)\times (n-1)$ block. The second step is to bring a diagonal matrix of determinant $1$ to the identity matrix $1_n$ via elementary operations. This is done using the transition
\begin{align*} \begin{pmatrix}
a & 0 \\
0 & b
\end{pmatrix} \leadsto
\begin{pmatrix}
a & a \\
0 & b
\end{pmatrix} \leadsto
\begin{pmatrix}
1 & a \\
(a^{-1}-1)b & b
\end{pmatrix} \leadsto
\begin{pmatrix}
1 & a \\
0 & ab
\end{pmatrix} \leadsto
\begin{pmatrix}
1 & 0 \\
0 & ab
\end{pmatrix}.\end{align*}
Finally, if the additive group of $A$ is generated by $\{a_1,\dots, a_k\}$, then the corresponding matrices $\{1_n+a_1\cdot e_{ij},\dots, 1_n+a_k\cdot e_{ij}: i\neq j\}$ generate all the elementary matrices.
\end{proof}

Let $N$ be a positive integer. Reduction modulo $N$ defines a group homomorphism $\SL_n(\Z)\to \SL_n(\Z/N)$, which enjoys the following remarkable property:

\begin{lem}
The congruence homomorphism $\SL_n(\Z)\to \SL_n(\Z/N)$ is onto.
\end{lem}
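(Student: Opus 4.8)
The plan is to reduce the assertion to the statement that $\SL_n(\Z/N)$ is generated by elementary matrices $\{1_n + \bar a\, e_{ij} : i \neq j,\ \bar a \in \Z/N\}$. Granting this, surjectivity is immediate: an elementary matrix $1_n + \bar a\, e_{ij}$ over $\Z/N$ is the reduction modulo $N$ of the elementary matrix $1_n + a\, e_{ij} \in \SL_n(\Z)$ for any integer $a$ lifting $\bar a$, and since the congruence homomorphism is a homomorphism, a product of elementary matrices over $\Z/N$ is the image of the corresponding product over $\Z$. So everything comes down to generating $\SL_n(\Z/N)$ by elementaries.

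For that I would imitate the proof of Proposition~\ref{SLn f.g.}: put a given matrix into diagonal form by elementary row and column operations, and then clean up the diagonal by the $2 \times 2$ transition displayed there, which only needs the diagonal entries to be units — and they are, their product being $1$. (Equivalently: peel off a $1$ in the top-left corner and recurse on the lower $(n-1)\times(n-1)$ block.) The obstacle is that $\Z/N$ is not a euclidean domain — indeed not even a domain — so the column-clearing step of Proposition~\ref{SLn f.g.}, which rests on division with remainder, does not transcribe verbatim. Here is the repair. If $\bar M \in \SL_n(\Z/N)$, expanding $\det \bar M = 1$ along the first column exhibits $1$ as a $\Z/N$-linear combination of the entries $\bar a_1, \dots, \bar a_n$ of that column; hence any integer lift $a_1, \dots, a_n$ satisfies $\gcd(a_1, \dots, a_n, N) = 1$. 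A short number-theoretic lemma, of Dirichlet/Chinese-remainder flavour — and this is where the hypothesis $n \geq 2$ enters — then lets one modify the lift by multiples of $N$ so that $\gcd(a_1, \dots, a_n) = 1$ outright. Now the genuine euclidean algorithm in $\Z$, a sequence of elementary row operations, turns the integer column $(a_1, \dots, a_n)^{T}$ into $(1, 0, \dots, 0)^{T}$; reducing these operations modulo $N$ clears the first column of $\bar M$ to $(1, 0, \dots, 0)^{T}$. Elementary column operations then clear the first row, and one recurses.

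I expect this lift-adjustment lemma to be the only genuinely non-formal point; everything else is bookkeeping carried over from Proposition~\ref{SLn f.g.}. It can also be bypassed: using the Chinese Remainder Theorem to factor $\SL_n(\Z/N) \cong \prod \SL_n(\Z/p^k)$ over the prime-power divisors of $N$, it suffices to treat each factor (and an elementary matrix in a single factor lifts to an elementary matrix over $\Z/N$, hence to one over $\Z$); and over the \emph{local} ring $\Z/p^k$ the entries of any column of a matrix in $\SL_n$ generate the whole ring, so one of them is already a unit, which is all the pivoting one needs.
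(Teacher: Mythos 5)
Your proposal is correct, and it actually contains \emph{two} proofs. The ``bypass'' you sketch at the end --- reduce to elementary generation, factor $\SL_n(\Z/N)\cong\prod\SL_n(\Z/p^k)$ by the Chinese Remainder Theorem, and over each local factor $\Z/p^k$ observe that some first-row (or first-column) entry of a matrix in $\SL_n$ must be a unit, so one can pivot on it --- is exactly the paper's argument. Your main route is genuinely different: lift a matrix to $\M_n(\Z)$, use the lift-adjustment lemma to arrange $\gcd(a_1,\dots,a_n)=1$ in the first column, then run the honest Euclidean algorithm over $\Z$ and reduce the resulting elementary operations modulo $N$. The lift-adjustment lemma is true for $n\geq 2$ and has precisely the Dirichlet/CRT flavour you ascribe to it (e.g., handle the exceptional case $a_2=\dots=a_n=0$ by replacing $a_2$ with $N$, and otherwise adjust $a_1$ by $tN$ to dodge each prime dividing $\gcd(a_2,\dots,a_n)$), but you would need to prove it, whereas the paper's local-ring pivot is self-contained. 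The trade-off: your lifting route is closer to the classical number-theoretic tradition and literally constructs an integral preimage, so it proves surjectivity without even naming elementary matrices; the paper's route is shorter because it applies CRT once to the ring $\Z/N$ instead of once to the lifted column and then dispenses with any gcd bookkeeping by working in a local ring where ``not in the maximal ideal'' is the same as ``unit''.
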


\begin{proof} Since $\SL_n(\Z)$ is generated by the elementary matrices, and the elementary matrices are mapped to the elementary matrices by the congruence homomorphism, its surjectivity is equivalent to the elementary generation of $\SL_n(\Z/N)$. The Chinese Remainder Theorem provides a decomposition $\Z/N\simeq \prod \Z/{p_i^{s_i}}$ into local rings, mirroring the decomposition $N=\prod p_i^{s_i}$ into primes. Direct products preserve elementary generation, and we now show that it holds over local rings. This is actually easier than elementary generation for euclidean domains. Let $A$ be a local ring and pick a matrix in $\SL_n(A)$. Some first-row entry is not in $\pi$, the maximal ideal of $A$, so it is invertible in $A$. Column swapping brings this element in the $(1,1)$-position, and then the first row and the first column can be cleared. The rest goes as in the proof of Proposition~\ref{SLn f.g.}. 
\end{proof}

The kernel of the congruence homomorphism
\begin{align*}\G(N):=\ker \big( \SL_n(\Z)\to \SL_n(\Z/N)\big)=\big\{X\in \SL_n(\Z): X \equiv 1_n \;\textrm{mod}\: N\big\}\end{align*}
is the \textbf{principal congruence subgroup of level $N$}. In particular, $\G(1)=\SL_n(\Z)$.

The principal congruence subgroups are normal, finite-index subgroups of $\SL_n(\Z)$. The following lemma provides the formula for their index.

\begin{lem}\label{computing the index} The index of $\G(N)$ is given by
\begin{align*}
[\G(1):\G(N)]=|\SL_n(\Z/N)|=N^{n^2-1}\prod_{p|N}\Big(\prod_{i=2}^{n}(1-p^{-i})\Big).
\end{align*}
\end{lem}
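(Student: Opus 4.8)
The starting point is the preceding lemma: the congruence homomorphism $\SL_n(\Z)\to \SL_n(\Z/N)$ is onto with kernel $\G(N)$, so the first isomorphism theorem gives $[\G(1):\G(N)]=|\SL_n(\Z/N)|$ and the task becomes a finite count. The Chinese Remainder Theorem provides a ring isomorphism $\Z/N\simeq \prod_i \Z/p_i^{s_i}$ mirroring $N=\prod_i p_i^{s_i}$, hence $\SL_n(\Z/N)\simeq \prod_i \SL_n(\Z/p_i^{s_i})$; so the plan is to compute $|\SL_n(\Z/p^s)|$ for a single prime power and then multiply. Comparing with the desired formula, the per-prime target is $|\SL_n(\Z/p^s)|=(p^s)^{n^2-1}\prod_{i=2}^n (1-p^{-i})$.

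To reach $\SL_n(\Z/p^s)$ I would go through $\GL_n$. First, reduction modulo $p$ is a ring map $\M_n(\Z/p^s)\to \M_n(\F_p)$ with kernel $p\,\M_n(\Z/p^s)$, of size $p^{(s-1)n^2}$; since $\Z/p^s$ is local with maximal ideal $(p)$, a matrix over it is invertible precisely when its reduction mod $p$ is invertible, so $|\GL_n(\Z/p^s)|=p^{(s-1)n^2}\,|\GL_n(\F_p)|$, with $|\GL_n(\F_p)|=\prod_{i=0}^{n-1}(p^n-p^i)$ by the standard count of ordered bases of $\F_p^n$. Second, the determinant gives a short exact sequence $1\to \SL_n(\Z/p^s)\to \GL_n(\Z/p^s)\to (\Z/p^s)^\times\to 1$, surjectivity being witnessed by diagonal matrices, so $|\SL_n(\Z/p^s)|=|\GL_n(\Z/p^s)|/|(\Z/p^s)^\times|=|\GL_n(\Z/p^s)|/\big(p^{s-1}(p-1)\big)$.

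It then remains to simplify and reassemble. Writing $\prod_{i=0}^{n-1}(p^n-p^i)=p^{n(n-1)/2}\prod_{i=1}^n(p^i-1)$, the factor $p-1$ cancels against the $i=1$ term and $p^{s-1}$ against part of $p^{(s-1)n^2}$; using $\prod_{i=2}^n(p^i-1)=p^{n(n+1)/2-1}\prod_{i=2}^n(1-p^{-i})$ and adding the three $p$-exponents $(s-1)(n^2-1)$, $n(n-1)/2$ and $n(n+1)/2-1$ (which total $s(n^2-1)$) yields exactly $|\SL_n(\Z/p^s)|=(p^s)^{n^2-1}\prod_{i=2}^n(1-p^{-i})$. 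Multiplying over the prime powers $p_i^{s_i}$ dividing $N$ then collapses $\prod_i (p_i^{s_i})^{n^2-1}$ into $N^{n^2-1}$ and leaves $\prod_{p\mid N}\prod_{i=2}^n(1-p^{-i})$, which is the claimed index. No step is a genuine obstacle: the only non-bookkeeping ingredients are the order of $\GL_n(\F_p)$ and the fact that invertibility over a local ring is detected modulo the maximal ideal, and the only real care needed is in tracking the exponents of $p$ through the cancellations.
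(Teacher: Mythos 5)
Your proof is correct and takes essentially the same route as the paper: reduce the count to prime powers, compute $|\GL_n(\Z/p^s)|$ from $|\GL_n(\F_p)|$ by using that invertibility over the local ring $\Z/p^s$ is detected modulo the maximal ideal (giving the factor $p^{(s-1)n^2}$), and then pass to $\SL_n$ via the determinant. The paper merely phrases the local-ring step for a general finite local ring $A$ with maximal ideal $\pi$ rather than directly for $\Z/p^s$, and invokes the CRT decomposition only at the very end, but the substance and bookkeeping are identical.
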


\begin{proof}
First, recall that
\begin{align*}|\SL_n(\Z/p)|=\frac{1}{p-1}|\GL_n(\Z/p)|=\frac{1}{p-1}\prod_{i=0}^{n-1}(p^n-p^i)=p^{n^2-1}\prod_{i=2}^{n}(1-p^{-i}).
\end{align*}
Next, we find the size of $\SL_n(\Z/{p^k})$. Consider again the general case of a local ring $A$ with maximal ideal $\pi$. The congruence map $\GL_n(A)\to \GL_n(A/\pi)$ is onto: \emph{any} lift to $A$ of a matrix in $\GL_n(A/\pi)$ has determinant not in $\pi$, i.e., invertible in $A$. Furthermore, the kernel of the congruence map $\GL_n(A)\to \GL_n(A/\pi)$ is $1_n+\M_n(\pi)$ since a matrix congruent to $1_n$ modulo $\pi$ has determinant in $1+\pi$, hence invertible in $A$. Thus, if $A$ is also finite, then
\begin{align*}|\GL_n(A)|=|\pi|^{n^2}\cdot |\GL_n(A/\pi)|.\end{align*}
Now $|\GL_n|=|\GL_1|\cdot |\SL_n|$ over any ring, and $|\GL_1(A)|=|\pi|\cdot |\GL_1(A/\pi)|$, so
\begin{align*}
|\SL_n(A)|=|\pi|^{n^2-1}\cdot |\SL_n(A/\pi)|.
\end{align*}
Returning to the particular case we are interested in, we obtain
\begin{align*}
|\SL_n(\Z/{p^k})|=p^{(k-1)(n^2-1)} |\SL_n(\Z/{p})|=(p^k)^{n^2-1}\prod_{i=2}^{n}(1-p^{-i}).
\end{align*}
Finally, the size of $\SL_n(\Z/N)$ is obtained by multiplying the above formula over the prime decomposition of $N$.
\end{proof}

Bass - Lazard - Serre (Bull. Amer. Math. Soc. 1964) and Mennicke (Ann. Math. 1965) have shown that, for $n\geq 3$, every finite-index subgroup of $\SL_n(\Z)$ contains some principal congruence subgroup. This Congruence Subgroup Property is definitely not true for $n=2$. The first one to state this failure was Klein (1880), and proofs were subsequently provided by Fricke (1886) and Pick (1886). It is in fact known by now that it is an exceptional feature for a finite-index subgroup of $\SL_2(\Z)$ to contain a principal congruence subgroup.

The principal congruence subgroups are organized according to the divisibility of their levels: $\G(M)\supseteq \G(N) \Leftrightarrow M|N$, that is, ``to contain is to divide''. This puts the emphasis on the prime stratum $\{\G(p) : p \textrm{ prime}\}$, and on the descending chains $\{\G(p^k) : k\geq 1\}$ corresponding to each prime $p$. Observe that 
$\cap_{p}\: \G(p)=\{1_n\}$, and that $\cap_{k}\: \G(p^k)=\{1_n\}$ for each prime $p$, meaning that the elements of $\SL_n(\Z)$ can be distinguished both along the prime stratum, as well as along each descending $p$-chain. Thus:

\begin{prop}
$\SL_n(\Z)$ is residually finite.
\end{prop}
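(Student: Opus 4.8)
The plan is to read off residual finiteness directly from the structure of the principal congruence subgroups established above. Recall that each $\G(N)$ is a normal subgroup of finite index in $\SL_n(\Z)$ — finiteness of the index is Lemma~\ref{computing the index} — and that $\G(N)$ is precisely the kernel of the congruence homomorphism $\SL_n(\Z)\to \SL_n(\Z/N)$, whose target is a finite group. So in order to distinguish a given non-trivial element in a finite quotient, it suffices to exhibit one level $N$ for which that element escapes $\G(N)$.

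First I would take an arbitrary $X\in\SL_n(\Z)$ with $X\neq 1_n$. Then some off-diagonal entry of $X$ is non-zero, or some diagonal entry of $X$ differs from $1$; either way there is a matrix entry $X_{ij}-\delta_{ij}$ equal to a non-zero integer $d$. Choosing any prime $p$ not dividing $d$ — equivalently, invoking the identity $\cap_{p}\:\G(p)=\{1_n\}$ already noted above — we obtain $X\not\equiv 1_n \bmod p$, that is, $X\notin\G(p)$.

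Finally I would conclude that the image of $X$ under the congruence homomorphism $\SL_n(\Z)\to\SL_n(\Z/p)$ is non-trivial, and $\SL_n(\Z/p)$ is finite. Hence every non-trivial element of $\SL_n(\Z)$ survives in some finite quotient, which is the definition of residual finiteness. There is no genuine obstacle here: all the effort has gone into constructing the congruence subgroups and verifying that they have finite index, and the present statement is simply the harvest. One could equally phrase the argument through the ``outside a finite-index subgroup'' reformulation of residual finiteness recalled in Section~\ref{VR}, taking $\G(p)$ as the finite-index subgroup avoiding $X$.
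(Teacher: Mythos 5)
Your argument is correct and is exactly the paper's: the paper also deduces residual finiteness from the observation that $\cap_{p}\:\G(p)=\{1_n\}$, each $\G(p)$ being the finite-index kernel of the congruence homomorphism onto the finite group $\SL_n(\Z/p)$. Your explicit choice of a prime $p$ not dividing a non-zero entry of $X-1_n$ merely spells out why that intersection is trivial.
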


Clearly $\SL_n(\Z)$ is not torsion-free. For example, 
\begin{align*} \begin{pmatrix} 0 & -1\\ 1 & 0\end{pmatrix}, \qquad \begin{pmatrix} 0 & -1\\ 1 & 1\end{pmatrix}\end{align*} 
are elements of order $4$, respectively $6$, in $\SL_2(\Z)$. However, we have:
\begin{prop}
$\SL_n(\Z)$ is virtually torsion-free.
\end{prop}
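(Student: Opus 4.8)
The plan is to exhibit an explicit finite-index torsion-free subgroup of $\SL_n(\Z)$, namely a principal congruence subgroup $\G(N)$ for a suitable level $N$. Since $\G(N)$ has finite index by Lemma~\ref{computing the index}, it suffices to show that $\G(N)$ is torsion-free for some $N\geq 3$. The natural choice is $N=3$, or more safely $N=4$; I will argue the general principle and then pick whatever value makes the arithmetic cleanest.

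First I would reduce to the following claim: if $X\in\SL_n(\Z)$ satisfies $X\equiv 1_n \bmod N$ and $X^p=1_n$ for some prime $p$, then $X=1_n$ (this suffices, since any non-trivial torsion element has a non-trivial power of prime order). Write $X=1_n+N^k Y$ where $k\geq 1$ is chosen maximal, so that $Y\in\M_n(\Z)$ has at least one entry not divisible by $N$; I want to derive a contradiction unless $Y=0$. Expanding $X^p=(1_n+N^k Y)^p$ by the binomial theorem gives $1_n + pN^k Y + \binom{p}{2}N^{2k}Y^2 + \dots = 1_n$, hence $pN^k Y = -\big(\binom{p}{2}N^{2k}Y^2+\dots\big)$. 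The right-hand side is divisible by $N^{2k}$, so $N^{2k}\mid pN^kY$ entrywise, i.e. $N^k \mid pY$. If $p$ does not divide $N$, then since $N^k\mid pY$ and $\gcd(p,N^k)=1$ we get $N^k\mid Y$, contradicting maximality of $k$ (assuming $Y\neq 0$, which we may since $X\neq 1_n$). The remaining case is when $p\mid N$; this forces a slightly more careful $N$-adic valuation count, and this is where the choice of $N$ matters.

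The main obstacle is precisely the prime $p$ dividing $N$. For $N=2$ the argument genuinely fails --- indeed $\G(2)$ contains $-1_n$ when $n$ is even, and more elaborate torsion for larger $n$ --- so one must take $N$ with a prime factor $p$ for which the $2$-adic-type cancellation does not occur, or avoid the problem by taking $N$ not divisible by the relevant primes. The clean fix: take $N=q$ an odd prime (say $q=3$). Then the only prime $p$ we need to worry about in the equation $N^k\mid pY$ is $p=q$ itself; but in that case, looking at the $q$-adic valuation, $v_q(pN^kY)=1+k+v_q(Y)$ while $v_q$ of the error term $\binom{p}{2}N^{2k}Y^2+\cdots$ is at least $2k$ (each subsequent term carries at least $N^{2k}$, and for the term $\binom{p}{j}N^{jk}Y^j$ with $j\geq 2$ one has $jk\geq 2k$), so $1+k+v_q(Y)\geq 2k$, giving $v_q(Y)\geq k-1$. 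This does not immediately contradict maximality, so one tightens by noting that for $q\geq 3$ the binomial coefficients $\binom{q}{j}$ for $2\le j\le q-1$ are divisible by $q$, so the $j=2$ term actually has valuation $\geq 1+2k$ and the $j=q$ term has valuation $qk\geq 3k$; re-running the estimate yields $v_q(pN^kY)\geq \min(1+2k,3k)=1+2k$ (for $k\ge1$), hence $1+k+v_q(Y)\geq 1+2k$, i.e. $v_q(Y)\geq k$, contradicting maximality of $k$ unless $Y=0$.

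Finally I would assemble the pieces: $\G(3)$ (equivalently any $\G(N)$ with $N$ an odd prime, or $N=4$ by a parallel $2$-adic computation that works once $4\mid N$) is torsion-free, and it has finite index $[\G(1):\G(3)]<\infty$ by Lemma~\ref{computing the index}. Hence $\SL_n(\Z)$ is virtually torsion-free. One cosmetic remark worth including: this also re-proves, via Proposition~\ref{one p suffices} territory, nothing new, but it is consistent with the general Platonov picture since $\SL_n(\Z)$ is residually $p$-finite along each chain $\{\G(p^k)\}$, and residual $p$-finiteness already forces torsion orders to be powers of $p$ --- combining two different primes $p,q$ gives torsion-freeness of $\G(1)$ itself were it residually both, which it is not, so the congruence-subgroup argument above is the honest route.
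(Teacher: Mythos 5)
Your argument is correct and follows essentially the same line as the paper: show that a principal congruence subgroup is torsion-free by a $q$-adic valuation count on the binomial expansion of $X^q=1_n$ (the paper writes $p^s$ for the content of $X-1_n$ where you write $X-1_n=N^kY$ with $v_q(Y)=0$, but the estimates are the same). The paper proves the stronger Minkowski lemma that $\G(N)$ is torsion-free for every $N\geq 3$, treating the case $\G(4)$ in addition to the odd-prime levels you handle, but the odd-prime case alone suffices for the proposition.
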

This is an immediate consequence of the following fact, due to Minkowski (1887):
\begin{lem}
$\G(N)$ is torsion-free provided $N\geq 3$.
\end{lem}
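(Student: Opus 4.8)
The plan is to show that a non-trivial torsion element of $\G(N)$ leads to a contradiction when $N \geq 3$. Suppose $X \in \G(N)$ has finite order; replacing $X$ by a suitable power, we may assume $X$ has prime order $\ell$, so $X \neq 1_n$ but $X^\ell = 1_n$. Write $X = 1_n + N^k M$ where $k \geq 1$ is as large as possible, i.e., $M \in \M_n(\Z)$ is a matrix not all of whose entries are divisible by the ambient prime under consideration. The idea is to expand $X^\ell = (1_n + N^k M)^\ell$ by the binomial theorem and read off a congruence that forces $M \equiv 0$, contradicting maximality of $k$.

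First I would treat the case where $\ell$ is an odd prime, or more generally where $\ell$ does not divide $N$ in a problematic way. Expanding, $1_n = X^\ell = 1_n + \ell N^k M + \binom{\ell}{2} N^{2k} M^2 + \cdots$, so $\ell N^k M = -\sum_{j \geq 2} \binom{\ell}{j} N^{jk} M^j$. Every term on the right is divisible by $N^{2k}$, hence $\ell M \equiv 0 \bmod N^k$, and in fact one gets $\ell M$ divisible by $N^k$ with the quotient again controlled. The cleanest route is to localize: pick a prime $p \mid N$ and work $p$-adically. If $p \neq \ell$, then $\ell$ is a unit mod $p$, and the relation $\ell N^k M \equiv 0 \bmod N^{2k}$ (using $2k \geq k+1$) gives $M \equiv 0 \bmod p$, the desired contradiction. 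If $p = \ell$, one must be more careful: $\binom{\ell}{j}$ is divisible by $\ell = p$ for $1 \leq j \leq \ell - 1$, so every term $\binom{\ell}{j} N^{jk} M^j$ for $j < \ell$ is divisible by $p \cdot N^k$, hence by $N^{k+1}$ (here is where $N \geq 3$, equivalently $p^{v_p(N)} \geq 3$ so that $v_p(N) \geq 1$ and actually we need the extra factor of $p$ to beat the shortfall); the remaining term is $N^{\ell k} M^\ell$, which is divisible by $N^{2k}$ since $\ell \geq 2$. Comparing $p$-adic valuations of $\ell N^k M = \ell p^k \cdots$ against the right-hand side then forces $v_p(M_{ij}) \geq 1$ for all entries, again contradicting maximality of $k$ — but this comparison fails precisely when $p = 2$ and $v_2(N) = 1$, i.e.\ $N = 2$, which is why the hypothesis is $N \geq 3$.

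The main obstacle, then, is the characteristic-$2$ subtlety: when $p = 2$ the binomial coefficient $\binom{2}{1} = 2$ supplies only one extra factor of $2$, and if $v_2(N) = 1$ this is exactly cancelled, so the argument degenerates — indeed $-1_n \cdot(\text{reflection})$-type elements show $\G(2)$ genuinely has torsion. I would handle this by noting that for $N \geq 3$ we always have, for the relevant prime $p \mid N$, that $2 v_p(N) \geq v_p(N) + 1$ with enough room, and when $p = 2$ the hypothesis $N \geq 3$ forces either $v_2(N) \geq 2$ (so $2 v_2(N) \geq v_2(N) + 2 > v_2(N) + 1$, absorbing the weak binomial coefficient) or $N$ has an odd prime divisor (handled by the $p \neq \ell$ case above). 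Assembling these cases yields $M \equiv 0 \bmod p$ in every case, the contradiction that proves $X = 1_n$.
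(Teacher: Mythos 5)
Your strategy — expand $X^\ell = 1_n$ by the binomial theorem, then compare $p$-adic valuations of both sides — is exactly the paper's, but there is a real gap in how you set up the valuation bookkeeping, and one genuinely false intermediate claim.

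The gap: you write $X = 1_n + N^k M$ with $k$ maximal and then treat ``$M \equiv 0 \bmod p$'' as the desired contradiction. But maximality of $k$ only tells you $M \not\equiv 0 \bmod N$; it does \emph{not} tell you $M \not\equiv 0 \bmod p$ for the particular prime $p \mid N$ you chose to localize at. For instance, take $N = 6$, $\ell = 2$, and suppose $X - 1_n = 6M$ with some entry of $M$ odd but every entry of $M$ divisible by $3$. Then $k = 1$ is maximal, your $p = 2$ branch is the degenerate one (since $v_2(6)=1$), and your proposed escape — switch to the odd prime $p = 3$ and derive $M \equiv 0 \bmod 3$ — derives something that was true all along, hence no contradiction. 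The decomposition you want is prime-specific: for each prime $p \mid N$, set $s_p = \min_{a,b} v_p\bigl((X-1_n)_{a,b}\bigr)$ and write $X - 1_n = p^{s_p} Y_p$ with $Y_p \not\equiv 0 \bmod p$, \emph{then} run the valuation comparison. The paper sidesteps this bookkeeping entirely by first observing that for $N \geq 3$ either an odd prime $p$ divides $N$, so $\G(N) \subseteq \G(p)$, or $4 \mid N$, so $\G(N) \subseteq \G(4)$; one can therefore assume $N$ is $4$ or an odd prime, and then $p$-adic valuation with respect to the single prime $p \mid N$ is unambiguous. Your approach can be repaired with the prime-specific $p^{s_p}Y_p$ decomposition, but as written the contradiction at the end is not actually a contradiction.

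One further incorrect step: the claim that $\binom{\ell}{j}N^{jk}M^j$ is ``divisible by $p\cdot N^k$, hence by $N^{k+1}$'' is false for composite $N$; for example $2\cdot 6^k$ is not divisible by $6^{k+1}$. What is true, and what your argument actually needs, is the inequality on $p$-adic valuations: for $2 \le j \le \ell - 1$ with $\ell = p$, the term has $v_p \geq 1 + 2s_p$, while the $j = \ell$ term has $v_p \geq \ell s_p$. Also, ``$v_2(N) = 1$, i.e.\ $N = 2$'' is not an equivalence (take $N = 6$), though your closing sentence patches this by splitting into $v_2(N) \geq 2$ versus $N$ having an odd prime factor — the right case split, just stated carelessly earlier.
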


\begin{proof} It suffices to show that $\G (4)$ and $\G(p)$, where $p\geq 3$ is a prime, are torsion-free. Let $p$ be any prime, and assume that $X\in \G(p)$ is a non-trivial element having finite order. Up to replacing $X$ by a power of itself, we may assume that $X^q=1_n$ for a prime $q$. Then
\begin{align*}
-q (X-1_n)=\sum_{i\geq 2}^q \binom{q}{i} (X-1_n)^i.
\end{align*}
Let $p^s$, where $s\geq 1$, be the highest power of $p$ dividing all the entries of $X-1_n$. The left hand side of the displayed identity is divisible by at most $p^s$ if $q\neq p$, and by at most $p^{s+1}$ if $q=p$. The right hand side is divisible by $p^{2s}$, and even by $p^{2s+1}$ if $q=p\geq 3$. Hence $q=p=2$ and $s=1$. The conclusion that $p=2$ and $s=1$ means that $\G(2)$ is the only one in the prime stratum which harbours torsion, and that $\G(4)$, its successor in the descending $2$-chain, is free of torsion. The conclusion that $q=2$ means that torsion elements in $\G(2)$ have order a power of $2$. As $X^2\in \G(4)$ whenever $X\in \G(2)$, and $\G(4)$ is torsion-free, it follows that non-trivial torsion elements in $\G(2)$ have order $2$.
\end{proof}

This lemma can be used to control the torsion spectrum - that is, the possible orders of torsion elements - in $\SL_n(\Z)$. Let us illustrate the basic idea in the simplest case, when $n=2$. Given a group homomorphism, the torsion spectra of its domain, kernel, and range are trivially related by $\tau(\mathrm{dom})\subseteq \tau(\mathrm{ker})\cdot \tau(\mathrm{ran})$. In the case of a congruence homomorphism, this reads as $\tau(\SL_2(\Z))\subseteq \tau(\G(N))\cdot \tau(\SL_2(\Z/N))$. If $N=3$ then $\tau(\G(3))=\{1\}$, and it can be checked that $\tau(\SL_2(\Z/3))=\{1,2,3,4,6\}$. Somewhat easier, in fact, is to let $N=2$: then $\tau(\G(2))=\{1,2\}$, and it is immediate that $\tau(\SL_2(\Z/2))=\{1,2,3\}$. We conclude that $\tau(\SL_2(\Z))\subseteq \{1,2,3,4,6\}$. Equality holds, actually, since there are elements of order $4$ and $6$ in $\SL_2(\Z)$. 

The presence of a torsion element with composite order, namely $6$, implies that $\SL_n(\Z)$ is not residually $p$-finite for any prime $p$. As with torsion-freeness, this is easily remedied by passing to a finite-index subgroup:

\begin{prop}
$\SL_n(\Z)$ is virtually residually $p$-finite for each prime $p$.
\end{prop}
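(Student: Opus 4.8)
The plan is to exhibit a single finite-index subgroup of $\SL_n(\Z)$ that is honestly residually $p$-finite; by definition, this is exactly what virtual residual $p$-finiteness demands. The candidate is the principal congruence subgroup $\G(p)$, sitting at the top of the descending $p$-chain $\G(p)\supseteq \G(p^2)\supseteq \G(p^3)\supseteq\cdots$ considered above.

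First I would note that each $\G(p^k)$ is normal in $\SL_n(\Z)$, hence normal in $\G(p)$, so the quotients $\G(p)/\G(p^k)$ make sense and are finite. The key point is that they are $p$-groups. Using the index formula of Lemma~\ref{computing the index},
\begin{align*}
[\G(p):\G(p^k)]=\frac{|\SL_n(\Z/p^k)|}{|\SL_n(\Z/p)|}=\frac{(p^k)^{n^2-1}}{p^{\,n^2-1}}=p^{(k-1)(n^2-1)},
\end{align*}
which is a power of $p$; so $\G(p)/\G(p^k)$ is a finite $p$-group. (Alternatively, one avoids the order count by observing that each quotient $\G(p^k)/\G(p^{k+1})$ is elementary abelian of exponent $p$, embedding additively into $\M_n(\Z/p)$, so that $\G(p)/\G(p^k)$ is an iterated extension of such.)

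Next I would invoke $\cap_k\,\G(p^k)=\{1_n\}$, recorded in the discussion of the $p$-chains; since $\G(p^k)\subseteq \G(p)$ for $k\geq 1$, this intersection is still trivial when taken inside $\G(p)$. Hence any non-trivial $X\in \G(p)$ escapes some $\G(p^k)$, and so has non-trivial image in the finite $p$-group $\G(p)/\G(p^k)$. This shows $\G(p)$ is residually $p$-finite, and since it has finite index in $\SL_n(\Z)$ by Lemma~\ref{computing the index}, we are done.

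Frankly, there is no real obstacle here: the only inputs are the index computation and the residual triviality of the $p$-chain, both already in hand. The one subtlety worth flagging is that residual $p$-finiteness, unlike virtual residual $p$-finiteness, does not pass to finite-index supergroups --- which is why the conclusion must be stated virtually, consistent with the earlier remark that $\SL_n(\Z)$ itself, containing an element of order $6$, is residually $p$-finite for no prime $p$.
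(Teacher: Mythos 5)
Your proposal is correct and follows essentially the same route as the paper: the paper proves the slightly more general Lemma~\ref{residual p-finiteness of PCS} (that $\G(N)$ is residually $p$-finite for each $p$ dividing $N$) by showing the successive quotients $\G(p^k)/\G(p^{k+1})$ are $p$-groups and invoking $\cap_k \G(p^k)=\{1_n\}$, and it explicitly lists the index-formula computation you use as one of its arguments. No gaps.
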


More precisely, we show:

\begin{lem}\label{residual p-finiteness of PCS}
$\G(N)$ is residually $p$-finite for each prime $p$ dividing $N$.
\end{lem}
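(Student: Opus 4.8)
The plan is to show that the chain of subgroups $\G(N) \supseteq \G(N p) \supseteq \G(N p^2) \supseteq \cdots$ exhibits $\G(N)$ as residually $p$-finite, where $p \mid N$. Concretely, I would set $\G_k := \G(N p^{k})$ for $k \geq 0$, so $\G_0 = \G(N)$, and verify three things: (i) $\bigcap_k \G_k = \{1_n\}$; (ii) each $\G_k$ is normal in $\G_0$; and (iii) each quotient $\G_0 / \G_k$ is a finite $p$-group. Point (i) is immediate since an integer matrix congruent to $1_n$ modulo $N p^k$ for all $k$ must equal $1_n$. Point (ii) follows because each $\G(M)$ is normal in $\SL_n(\Z)$, hence a fortiori normal in $\G_0$. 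Given these, any non-trivial $X \in \G_0$ lies outside some $\G_k$, and maps to a non-trivial element of the finite $p$-group $\G_0/\G_k$, which is exactly residual $p$-finiteness.

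The heart of the matter is point (iii): I need $\G(N)/\G(N p^k)$ to be a $p$-group for every $k \geq 1$. Since $[\G(1):\G(M)] = |\SL_n(\Z/M)|$ by Lemma~\ref{computing the index}, and since $\G(Np^k) \subseteq \G(N)$, the index $[\G(N):\G(Np^k)]$ equals $|\SL_n(\Z/Np^k)| / |\SL_n(\Z/N)|$. Writing $N = p^a m$ with $p \nmid m$, the formula of Lemma~\ref{computing the index} is multiplicative over the prime decomposition, so all factors coming from primes dividing $m$ cancel, and the quotient of the two orders is $|\SL_n(\Z/p^{a+k})| / |\SL_n(\Z/p^{a})|$ — provided $a \geq 1$, which is precisely the hypothesis $p \mid N$. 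By the computation $|\SL_n(\Z/p^j)| = (p^j)^{n^2-1}\prod_{i=2}^n(1-p^{-i})$, the product factors again cancel and one is left with $p^{k(n^2-1)}$, a power of $p$. Hence $[\G(N):\G(Np^k)] = p^{k(n^2-1)}$, so the finite group $\G(N)/\G(Np^k)$ has $p$-power order and is therefore a $p$-group.

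An alternative to the index computation, which I find cleaner and would present instead, is to identify the quotient directly. Reduction modulo $Np^k$ maps $\G(N)$ onto the subgroup of $\SL_n(\Z/Np^k)$ consisting of matrices congruent to $1_n$ modulo $N$, with kernel exactly $\G(Np^k)$; thus $\G(N)/\G(Np^k) \cong \{X \in \SL_n(\Z/Np^k) : X \equiv 1_n \bmod N\}$. Using the Chinese Remainder Theorem to split off the $p$-part, and the fact that $p \mid N$, this is isomorphic to $\{X \in \SL_n(\Z/p^{a+k}) : X \equiv 1_n \bmod p^a\}$, which sits inside $1_n + \M_n(p^a \Z/p^{a+k}\Z)$ — a set of $p$-power cardinality. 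Either way, the conclusion is that $\G(N)/\G(Np^k)$ is a finite $p$-group, and together with (i) and (ii) this finishes the proof.

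The step I expect to require the most care is making sure the cancellation in the index formula genuinely uses $p \mid N$: if $p \nmid N$, the $p$-factors do not cancel and $\G(N)/\G(Np^k)$ picks up a factor of $|\SL_n(\Z/p)|$, which is generally not a $p$-group (e.g.\ it contains elements of order dividing $p-1$). So the hypothesis is essential and must be invoked at exactly that point; everything else is bookkeeping with the tools already in hand.
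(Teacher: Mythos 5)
Your argument is correct, and it is essentially one of the routes the paper itself offers: the paper first reduces to $\G(p)$ (residual $p$-finiteness passes to subgroups, and $\G(N)\leq \G(p)$ when $p\mid N$) and then notes, as one of its alternatives, that Lemma~\ref{computing the index} makes each successive quotient $\G(p^k)/\G(p^{k+1})$ a group of order $p^{n^2-1}$. Your only deviation is organizational — you run the index computation along the chain $\{\G(Np^k)\}$ inside $\G(N)$ directly instead of reducing to $\G(p)$ first — and your remark about exactly where the hypothesis $p\mid N$ is used is correct.
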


\begin{proof} It suffices to prove that, for each prime $p$, $\G(p)$ is residually $p$-finite. To that end, we claim that each successive quotient $\G(p^{k})/\G(p^{k+1})$ in the descending chain $\{\G(p^k) : k\geq 1\}$ is a $p$-group. This is seen most directly by observing that each element in $\G(p^{k})/\G(p^{k+1})$ has order $p$: for any matrix $1_n+p^kX\in \G(p^k)$ we have
\begin{align*}(1_n+p^kX)^p=1_n+\sum _{i=1}^p \binom{p}{i} p^{ki}X^i\in \G(p^{k+1}).\end{align*}

Another way is to use the formula for the index (Lemma~\ref{computing the index}), which yields that $\G(p^{k})/\G(p^{k+1})$ has size $p^{n^2-1}$.

A third, more involved argument shows that each successive quotient $\G(p^{k})/\G(p^{k+1})$ is isomorphic to $(\Z/p,+)^{n^2-1}$. Start by noting that, for any matrix $1_n+p^kX\in \G(p^k)$, we have $1=\det (1_n+p^kX)=1+p^k\:\mathrm{tr}(X)\textrm{ mod } p^{2k}$; in particular, $p$ divides $\mathrm{tr} (X)$. Let $\mathfrak{sl}_n(\Z/p)$ denote the additive group of traceless $n\times n$ matrices over $\Z/p$. Then the map
\begin{align*} \phi_k: \G(p^k)\to \mathfrak{sl}_n(\Z/p), \quad \phi_k(1_n+p^kX)= X \textrm{ mod } p\end{align*}
is well-defined. Firstly, $\phi_k$ is a homomorphism: for $1_n+p^kX$ and $1_n+p^kY$ in $\G(p^k)$ we have
$\phi_k\big((1_n+p^kX)(1_n+p^kY)\big)= X+Y+p^kXY \textrm{ mod } p=X+Y \textrm{ mod } p$.
Secondly, the kernel of $\phi_k$ is $\G(p^{k+1})$. Thirdly, we claim that $\phi_k$ is onto. The target group $\mathfrak{sl}_n(\Z/p)$ is generated by the $n^2-n$ off-diagonal matrix units $\{e_{ij}: 1\leq i\neq j \leq n\}$ together with the $n-1$ diagonal differences $\{e_{ii}-e_{i+1\: i+1}: 1\leq i\leq n-1\}$. It is immediate that the off-diagonal matrix units are in the image of $\phi_k$, as $\phi_k(1_n+p^ke_{ij})=e_{ij}$ for $i\neq j$. To obtain the diagonal differences, consider an $n\times n$ matrix having the $2\times 2$-block
\begin{align*}
\begin{pmatrix} 1+p^k& p^k\\-p^k &1-p^k \end{pmatrix}
\end{align*}
on the diagonal, all the other non-zero entries being $1$'s along the remaining diagonal slots. This is a matrix in $\G(p^k)$ which is mapped by $\phi_k$ to $e_{ii}-e_{i+1\: i+1} + e_{i\: i+1}-e_{i+1\: i}$. As $e_{i\: i+1}$ and $e_{i+1\: i}$ are in the image of $\phi_k$, the same is true for $e_{ii}-e_{i+1\: i+1}$. \end{proof}

\begin{rem}\label{what lurks} Scratch most properties of $\SL_n(\Z)$ and you will find a great discrepancy between $\SL_2(\Z)$ and $\SL_{n\geq 3}(\Z)$ lurking underneath. For the discussion at hand, the difference turns out to be the following: $\SL_2(\Z)$ admits finite-index subgroups which are residually $p$-finite for all primes $p$, whereas in $\SL_{n\geq 3}(\Z)$ every finite-index subgroup is residually $p$-finite for only finitely many primes $p$. The question which clarifies and sharpens this contrast is whether principal congruence subgroups can be residually $p$-finite for a prime $p$ not dividing the level.

In $\SL_2(\Z)$, the answer is that $\G(2)$ is residually $p$-finite for $p=2$ only, but $\G(N)$ with $N\geq 3$ is residually $p$-finite for all primes $p$. The exceptional case is due to the $2$-torsion in $\G(2)$. In the higher level case there is no torsion. Now a torsion-free subgroup of $\SL_2(\Z)$ is free, since $\SL_2(\Z)$ acts on a tree with finite vertex stabilizers and without inversion (see I.\S 4 of Serre's \emph{Trees}, Springer 1980). Thus $\G(N)$ with $N\geq 3$ is free. We may then use mutual abstract embeddings to conclude that $\G(N)$ with $N\geq 3$, in fact every free group, is residually $p$-finite for all primes $p$.

In $\SL_{n\geq 3}(\Z)$, the answer is that $\G(N)$ is residually $p$-finite if and only if $p$ divides $N$. Once we know this, the Congruence Subgroup Property will imply that each finite-index subgroup of $\SL_{n\geq 3}(\Z)$ is residually $p$-finite for only finitely many primes $p$. Now let us justify the answer, specifically the forward implication. The proof hinges on computing the abelianization of $\G (N)$, and this is essentially due to Lee and Szczarba (Invent. Math. 1976). As in the proof of Lemma~\ref{residual p-finiteness of PCS}, there is a well-defined homomorphism 
\begin{align*} \G(N)\to \mathfrak{sl}_n(\Z/N), \quad 1_n+NX \mapsto X \textrm{ mod } N\end{align*}
which is furthermore onto. Thus $\G(N)/\G(N^2)\simeq \mathfrak{sl}_n(\Z/N)\simeq (\Z/N, +)^{n^2-1}$, and the commutator subgroup $[\G(N),\G(N)]$ is contained in $\G(N^2)$. On the other hand, we have $1_n+N^2e_{ik}=[1_n+Ne_{ij},1_n+Ne_{jk}]\in [\G(N),\G(N)]$ for distinct $i,j, k$. At this point we use the fact that the principal congruence subgroup of level $M$ is normally generated by $\{1_n+Me_{ij}: i\neq j\}$, the $M$-th powers of the elementary matrices. This is what Mennicke actually proved in his approach to the Congruence Subgroup Property. As pointed out soon after by Bass - Milnor - Serre (Publ. Math. IHES 1967), this fact is equivalent to the Congruence Subgroup Property. It follows that $\G(N^2)$ is contained in $[\G(N),\G(N)]$, by the normality of $\G(N)$. Summarizing, $[\G(N),\G(N)]=\G(N^2)$, so that the abelianization of $\G (N)$ is $(\Z/N, +)^{n^2-1}$. Finally, if $\G(N)$ maps onto a non-trivial finite $p$-group then the abelianization of $\G(N)$ maps onto the corresponding abelianization, which is a non-trivial $p$-group, and we conclude that $p$ divides $N$.
\end{rem}

%%%%%%%%%%%%%%%%%%%%%%
\section{Proof of Platonov's theorem}\label{P}
Let $G$ be a finitely generated linear group over a field $K$, say $G\leq \GL_n(K)$. In $K$, consider the subring $A$ generated by the multiplicative identity $1$ and the matrix entries of a finite, symmetric set of generators for $G$. Thus $A$ is a finitely generated domain, and $G\leq \GL_n(A)$. Platonov's theorem is then a consequence of the following:

\begin{thm}\label{what we're actually proving}
Let $A$ be a finitely generated domain. If $\mathrm{char}\:A=0$, then $\GL_n(A)$ is virtually residually $p$-finite for all but finitely many primes $p$. If $\mathrm{char}\:A=p$, then $\GL_n(A)$ is virtually residually $p$-finite.
\end{thm}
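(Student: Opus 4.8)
The plan is to realise, for each admissible prime $p$, an explicit finite-index subgroup of $\GL_n(A)$ which is residually $p$-finite, cut out by the congruence subgroups attached to a well-chosen maximal ideal of $A$. Since $A$ is a finitely generated domain it is Noetherian by the Hilbert basis theorem, and by the Nullstellensatz every maximal ideal $\mathfrak m$ of $A$ has \emph{finite} residue field $A/\mathfrak m$, necessarily of some positive characteristic. If $\mathrm{char}\:A=p$, then any maximal ideal $\mathfrak m$ serves, its residue field being a finite field of characteristic $p$. If $\mathrm{char}\:A=0$, I instead need a maximal ideal $\mathfrak m$ with $A/\mathfrak m$ of characteristic $p$; such an $\mathfrak m$ exists precisely when $pA\neq A$, in which case one pulls back a maximal ideal of the nonzero finitely generated $\F_p$-algebra $A/pA$. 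A spreading-out argument --- generic freeness of $A$ over $\Z$, or Chevalley's theorem applied to $\mathrm{Spec}\:A\to\mathrm{Spec}\:\Z$ --- shows that $pA=A$ for only finitely many primes $p$, and these are the finitely many exceptions permitted in the statement.

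Fix such an $\mathfrak m$ and set $\G_k=\ker\big(\GL_n(A)\to\GL_n(A/\mathfrak m^k)\big)=\{X\in\GL_n(A): X\equiv 1_n\bmod\mathfrak m^k\}$ for $k\geq 1$. These are normal subgroups forming a descending chain $\G_1\supseteq\G_2\supseteq\cdots$, and $\G_1$ has finite index in $\GL_n(A)$ because $A/\mathfrak m$, hence $\GL_n(A/\mathfrak m)$, is finite. The claim is that $\G_1$ is residually $p$-finite, which will complete the proof. Two facts are needed. First, $\bigcap_{k\geq 1}\G_k=\{1_n\}$: this holds because $\bigcap_k\mathfrak m^k=0$ by the Krull intersection theorem --- valid as $A$ is a Noetherian domain and $\mathfrak m\neq A$ --- so a matrix congruent to $1_n$ modulo every power of $\mathfrak m$ equals $1_n$. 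Second, each quotient $\G_1/\G_k$ is a finite $p$-group: it is finite because it embeds in the finite group $\ker\big(\GL_n(A/\mathfrak m^k)\to\GL_n(A/\mathfrak m)\big)$, and it is a $p$-group because so is each successive quotient $\G_j/\G_{j+1}$ --- the map $1_n+X\mapsto X\bmod\mathfrak m^{j+1}$ being an injective homomorphism from $\G_j/\G_{j+1}$ into the elementary abelian $p$-group $(\mathfrak m^j/\mathfrak m^{j+1})^{n^2}$, which is killed by $p$ since $p\in\mathfrak m$. (Equivalently, $(1_n+X)^p\equiv 1_n\bmod\mathfrak m^{j+1}$ whenever $X\in\mathfrak m^j\M_n(A)$, exactly as in the computations of Section~\ref{SL}.)

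Granting these, a nontrivial element of $\G_1$ must lie outside some $\G_k$, hence survives in the finite $p$-group $\G_1/\G_k$; this is precisely residual $p$-finiteness of $\G_1$. As $\G_1$ has finite index in $\GL_n(A)$, the group $\GL_n(A)$ is virtually residually $p$-finite --- for every prime $p$ when $\mathrm{char}\:A=p$, and for all but finitely many primes $p$ when $\mathrm{char}\:A=0$.

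The real weight lies in the opening step: the commutative-algebra input that a finitely generated domain carries maximal ideals with finite residue fields of a prescribed characteristic, and that in characteristic zero reduction modulo $p$ is available for all but finitely many $p$. This is where finite generation of $A$ is indispensable. The remaining group theory is a routine manipulation of congruence subgroups, with the Krull intersection theorem supplying the separation that drives residual finiteness.
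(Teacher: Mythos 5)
Your proof is correct and follows essentially the same route as the paper's: fix a maximal ideal $\mathfrak m$ whose residue characteristic is $p$, filter $\GL_n(A)$ by the congruence subgroups modulo the powers $\mathfrak m^k$, separate points with the Krull intersection theorem, and extract the $p$-group property of the successive quotients from the $p$-th power (binomial) computation. The commutative-algebra inputs you outsource to standard citations --- that every residue field of a finitely generated $\Z$-algebra is finite, and that $pA=A$ for only finitely many primes $p$ in characteristic zero --- are precisely parts iii and v of Lemma~\ref{technical lemma}, which the paper proves from scratch (via a Zariski-lemma-type argument and Noether normalization, respectively); your references to the arithmetic Nullstellensatz and to Chevalley/generic freeness are legitimate substitutes. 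The one assertion you leave entirely unjustified is the finiteness of $\ker\big(\GL_n(A/\mathfrak m^k)\to\GL_n(A/\mathfrak m)\big)$, equivalently of $\mathfrak m^j/\mathfrak m^{j+1}$ for each $j$: this is true, but it is exactly the point where Noetherianity must be used again --- $\mathfrak m^j$ is a finitely generated ideal, so $\mathfrak m^j/\mathfrak m^{j+1}$ is a finitely generated module over the finite field $A/\mathfrak m$, hence finite --- and the paper devotes a paragraph to it. Add that one line and your argument is complete.
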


The proof of Theorem~\ref{what we're actually proving} is a straightforward variation on the example of $\SL_n(\Z)$, as soon as we know the following facts:

\begin{lem}\label{technical lemma}
Let $A$ be a finitely generated domain. Then the following hold:
\begin{itemize}
\item[i.] $A$ is noetherian.
\item[ii.] $\cap_{k}\: I^k =0$ for any ideal $I\neq A$.
\item[iii.] If $A$ is a field, then $A$ is finite. 
\item[iv.] The intersection of maximal ideals of $A$ is $0$.
\item[v.] If $\mathrm{char}\:A=0$, then only finitely many primes $p=p\cdot 1$ are invertible in $A$.
\end{itemize}
\end{lem}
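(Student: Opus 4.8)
The plan is to handle the five assertions one after another, each time reducing to a standard theorem of commutative algebra. Items (i) and (ii) are immediate: for (i), write $A$ as a quotient of a polynomial ring $\Z[x_1,\dots,x_m]$, which is noetherian by the Hilbert Basis Theorem, and recall that noetherianity is inherited by quotients; for (ii), invoke the Krull Intersection Theorem, which says that in a noetherian ring every $x\in\bigcap_k I^k$ satisfies $(1+a)x=0$ for some $a\in I$ --- and since $A$ is a domain with $I\neq A$, the element $1+a$ cannot vanish (otherwise $-1=a\in I$, so $I=A$), forcing $x=0$.

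Item (iii) is the one I expect to require the most work, and it is also the hinge on which (iv) turns; the plan is to prove the general fact that a field which is finitely generated as a $\Z$-algebra is finite. First I would rule out $\mathrm{char}\:A=0$: in that case $A$ would contain $\Q$, hence be a finitely generated $\Q$-algebra which is a field, so by Zariski's Lemma a finite extension of $\Q$; an Artin--Tate argument applied to $\Z\subseteq\Q\subseteq A$ would then exhibit $\Q$ itself as a finitely generated $\Z$-algebra, which is impossible (clearing denominators in the generators would give $\Q=\Z[1/D]$ for some integer $D$, yet $1/p\notin\Z[1/D]$ for a prime $p\nmid D$). Hence $\mathrm{char}\:A=p$ for some prime $p$, and then $A$ is a finitely generated $\F_p$-algebra which is a field, hence --- again by Zariski's Lemma --- a finite extension of $\F_p$, in particular finite.

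For (iv) I would show that any nonzero $a\in A$ avoids some maximal ideal. The localization $A[1/a]\cong A[t]/(at-1)$ is again a finitely generated domain and is nonzero, so it has a maximal ideal $\mathfrak m'$; by (iii) the field $A[1/a]/\mathfrak m'$ is finite. Its subring $A/(\mathfrak m'\cap A)$ is then a finite domain, hence a field, so $\mathfrak m:=\mathfrak m'\cap A$ is a maximal ideal of $A$, and $a\notin\mathfrak m$ because $a$ is a unit in $A[1/a]$. Letting $a$ run over all nonzero elements yields (iv).

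Finally, for (v), assuming $\mathrm{char}\:A=0$ and $A=\Z[\alpha_1,\dots,\alpha_m]$, the plan is a Noether-normalization-style reduction. A maximal algebraically independent subset of the generators, say $\alpha_1,\dots,\alpha_d$ after renumbering, is a transcendence basis of $\mathrm{Frac}(A)$ over $\Q$, so each remaining $\alpha_j$ satisfies a polynomial relation over $P:=\Z[\alpha_1,\dots,\alpha_d]$ with a nonzero leading coefficient $c_j$; setting $c=\prod_j c_j$, the ring $A[1/c]$ is module-finite over $P[1/c]$. If a prime $p$ is invertible in $A$, then $1/p\in A[1/c]$ is integral over $P[1/c]$, and clearing denominators in an integral dependence relation shows that $1/p$ already lies in $P[1/c]=\Z[\alpha_1,\dots,\alpha_d][1/c]$; hence $c^N=p\cdot g$ in the polynomial ring $\Z[\alpha_1,\dots,\alpha_d]$ for some $N\geq1$ and some $g$, and reducing modulo $p$ in the domain $\F_p[\alpha_1,\dots,\alpha_d]$ forces $p$ to divide every coefficient of $c$. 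Since $c\neq0$ has only finitely many coefficients, only finitely many primes can qualify. The genuinely substantial ingredient in the whole lemma is Zariski's Lemma, entering through (iii); (v) also needs a real idea, but (i), (ii) and (iv) are essentially formal once (iii) is available.
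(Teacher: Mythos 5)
Your proposal is correct throughout, and items (i), (ii), and (iv) coincide with the paper's proof essentially verbatim: Hilbert basis plus passage to quotients, Krull intersection in a domain, and the localization-at-$a$ trick for (iv).

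Items (iii) and (v), however, take genuinely different routes. For (iii) the paper does not quote Zariski's Lemma or Artin--Tate; instead it formulates a single auxiliary fact (if $F \subseteq F(u)$ is a field extension with $F(u)$ finitely generated as a ring, then the extension is finite \emph{and} $F$ is finitely generated as a ring), proves it from scratch with an elementary divisibility argument in $F[u]$ and a degree argument, and then climbs down the chain $A = F(a_1,\dots,a_N) \supseteq \dots \supseteq F$ one generator at a time. That single fact is precisely a one-variable fusion of Zariski's Lemma and Artin--Tate, so your version replaces the paper's self-contained induction by two named black boxes; you also split cases on the characteristic, whereas the paper's chain argument uniformly lands on ``$F$ is a finitely generated ring'' and then rules out $\Q$ at the very end. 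Both are valid; the paper's is longer but needs no external references, yours is shorter at the cost of quoting two theorems.

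For (v), both proofs hinge on the same observation that if $R$ is integral over a subring $S$ and $s \in S$ is a unit in $R$ then $s$ is already a unit in $S$, but the setup differs. The paper passes to $A_\Q = \Q \otimes_\Z A$, applies Noether Normalization over $\Q$, rescales the normalizing elements into $A$, finds a single integer $m$ so that $A[1/m]$ is integral over $\Z[1/m][x_1,\dots,x_N]$, and concludes from algebraic independence that $p$ invertible forces $p$ invertible in $\Z[1/m]$, i.e.\ $p \mid m$. You instead pick a maximal algebraically independent subset $\alpha_1,\dots,\alpha_d$ of the given generators, invert the product $c$ of leading coefficients of algebraic relations for the remaining generators to obtain an integral extension $P[1/c] \subseteq A[1/c]$ with $P = \Z[\alpha_1,\dots,\alpha_d]$ a polynomial ring, deduce $1/p \in P[1/c]$, hence $c^N = pg$ in $P$, and finish by reducing mod $p$ in the domain $\F_p[\alpha_1,\dots,\alpha_d]$ to force $p$ to divide the integer coefficients of $c$. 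Your route avoids invoking full Noether Normalization (you only need ``a finitely generated domain becomes module-finite over a polynomial subring after inverting one element'', which you reprove on the spot), at the price of a slightly more hands-on final reduction; the paper's closing step, by contrast, reads off the conclusion directly from algebraic independence. One small wording nit: the reason only finitely many primes qualify is not that $c$ has finitely many coefficients but that $c \neq 0$ has some \emph{nonzero} integer coefficient, whose prime divisors form the finite set in question -- but the intent is clear and the argument is sound.
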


Let us postpone the proof of Lemma~\ref{technical lemma} for the moment, and see how to obtain Theorem~\ref{what we're actually proving}. The principal congruence subgroup of $\GL_n(A)$ corresponding to an ideal $I$ of $A$ is defined by
\begin{align*}\G(I)=\ker \big( \GL_n(A)\to \GL_n(A/I)\big).\end{align*}
If $\pi$ is a maximal ideal then $A/\pi$ is a finite field, by \ref{technical lemma} iii, so $\G(\pi)$ has finite index in $\GL_n(A)$. Also $\cap_{\pi}\: \G(\pi)=\{1_n\}$ as $\pi$ runs over the maximal ideals of $A$, by \ref{technical lemma} iv. This shows that $\GL_n(A)$ is residually finite, thereby proving Malcev's theorem.

We claim that $\pi^k/\pi^{k+1}$ is finite for each $k\geq 1$. In general, if $M$ is an $R$-module which is annihilated by an ideal $I$ -- in the sense that $IM=0$ -- then $M$ is also an $R/I$-module in a natural way: namely, define $\overline{r}\cdot m:=r\cdot m$ for $r\in R$ and $m\in M$. Furthermore, if $M$ is finitely generated as an $R$-module then $M$ is finitely generated as an $R/I$-module. In the case at hand, the $A$-module $\pi^k$ is finitely generated since $A$ is noetherian, so the $A$-module $\pi^k/\pi^{k+1}$ is also finitely generated. Therefore $\pi^k/\pi^{k+1}$ is finite dimensional as an $A/\pi$-module. As $A/\pi$ is finite, $\pi^k/\pi^{k+1}$ is finite as well.

The ring $A/\pi^k$ is finite for each $k\geq 1$, so each $\G(\pi^k)$ has finite index in $\GL_n(A)$. Furthermore, $\cap_{k}\: \G(\pi^k)=\{1_n\}$ by \ref{technical lemma} ii. (This shows once again that $\GL_n(A)$ is residually finite.) Let $p$ be the characteristic of $A/\pi$, so $p\in \pi$. Then $\G(\pi^{k})/\G(\pi^{k+1})$ is a $p$-group: for $X\in \G(\pi^k)$ we have
\begin{align*}X^p=1_n+\sum _{i=1}^p \binom{p}{i} (X-1_n)^i\in \G(\pi^{k+1}).\end{align*}
To conclude, $\GL_n(A)$ is virtually residually $p$-finite for each prime $p$ not invertible in $A$. By \ref{technical lemma} v, this happens for all but finitely many primes $p$ in the zero characteristic case. In characteristic $p$, there is only such prime, namely $p$ itself. Theorem~\ref{what we're actually proving} is proved.

We now return to the proof of the lemma. 

\begin{proof}[Proof of Lemma~\ref{technical lemma}] The first two points are standard: i) follows from the Hilbert Basis Theorem, and ii) is the Krull Intersection Theorem for domains.

iii) We use the following fact:
\begin{quotation}
Let $F\subseteq F(u)$ be a field extension with $F(u)$ finitely generated as a ring. Then $F\subseteq F(u)$ is a finite extension and $F$ is finitely generated as a ring.
\end{quotation}
Here is how we use this fact. Let $F$ be the prime field of $A$ and let $a_1,\dots,a_N$ be generators of $A$ as a ring. Thus $A=F(a_1,\dots,a_N)$. Going down the chain 
\begin{align*}
A=F(a_1,\dots,a_N)\supseteq F(a_1,\dots,a_{N-1})\supseteq \ldots\supseteq F
\end{align*} 
we obtain that $F\subseteq A$ is a finite extension, and that $F$ is finitely generated as a ring. Then $F$ is a finite field, as $\Q$ is not finitely generated as a ring, and so $A$ is finite.

Now here is how we prove the fact. Assume that $u$ is transcendental over $F$, i.e., $F(u)$ is the field of rational functions in $u$. Let $P_1/Q_1, \dots, P_N/Q_N$ generate $F(u)$ as a ring, where $P_i, Q_i\in F[u]$. The multiplicative inverse of $1+u\cdot \prod Q_i$ is a polynomial expression in the $P_i/Q_i$'s, which can be written as $R/\prod Q_i^{s_i}$. Therefore $\prod Q_i^{s_i}=(1+u\cdot \prod Q_i)R$ in $F[u]$. But this is impossible, since $\prod Q_i^{s_i}$ is relatively prime to $1+u\cdot \prod Q_i$. 

Thus $u$ is algebraic over $F$. Let $X^d+\alpha_1X^{d-1}+\dots+\alpha_d$ be the minimal polynomial of $u$ over $F$. Let also $a_1,\dots,a_N$ be ring generators of $F(u)=F[u]$. We may write each $a_i$ as $\sum_{0\leq m\leq d-1} \beta_{i,m} \: u^m$, with $\beta_{i,m} \in F$. We claim that the $\alpha_j$'s and the $\beta_{i,m}$'s are ring generators of $F$. Let $c\in F$. Then $c$ is a polynomial in $a_1,\dots,a_N$ over $F$, hence a polynomial in $u$ over the subring of $F$ generated by the $\beta_{i,m}$'s, hence a polynomial in $u$ of degree less than $d$ over the subring of $F$ generated by the $\alpha_j$'s and the $\beta_{i,m}$'s. By the linear independence of $\{1,u,\dots,u^{d-1}\}$, the latter polynomial is actually of degree $0$. Hence $c$ ends up in the subring of $F$ generated by the $\alpha_j$'s and the $\beta_{i,m}$'s.

iv) Let $a\neq 0$ in $A$. To find a maximal ideal of $A$ not containing $a$, we rely on the basic avoidance: maximal ideals do not contain invertible elements. Consider the localization $A'=A[1/a]$. Let $\pi'$ be a maximal ideal in $A'$, so $a\notin \pi'$. The restriction $\pi=\pi'\cap A$ is an ideal in $A$, and $a\notin \pi$. We show that $\pi$ is maximal. The embedding $A\hookrightarrow A'$ induces an embedding $A/\pi\hookrightarrow A'/\pi'$. As $A'/\pi'$ is a field which is finitely generated as a ring, in follows from iii) that $A'/\pi'$ is finite field. Therefore the subring $A/\pi$ is a finite domain, hence a field as well.

v) We shall use Noether's Normalization Theorem, which says the following. 
\begin{quotation}
Let $R$ be a finitely generated algebra over a field $F\subseteq R$. Then there are elements $x_1,\dots, x_N\in R$ algebraically independent over $F$ such that $R$ is integral over $F[x_1,\dots,x_N]$. 
\end{quotation}

In our case, $\Z$ is a subring of $A$, and $A$ is an integral domain which is finitely generated as a $\Z$-algebra. Extending to rational scalars, we have that $A_\Q=\Q\otimes_\Z A$ is a finitely generated $\Q$-algebra. By the Normalization Theorem, there exist elements $x_1,\dots, x_N$ in $A_\Q$ which are algebraically independent over $\Q$, and such that $A_\Q$ is integral over $\Q[x_1,\dots,x_N]$. Up to replacing each $x_i$ by an integral multiple of itself, we may assume that $x_1,\dots, x_N$ are in $A$. There is some positive $m\in \Z$ such that each ring generator of $A$  is integral over $\Z[1/m][x_1,\dots, x_N]$. Thus $A[1/m]$ is integral over the subring $\Z[1/m][x_1,\dots, x_N]$. If a prime $p$ is invertible in $A$, then it is also invertible in $A[1/m]$ while at the same time $p\in \Z[1/m][x_1,\dots, x_N]$. 

Now we use the following general fact. Let $R$ be a ring which is integral over a subring $S$. If $s\in S$ is invertible in $R$, then $s$ is already invertible in $S$. The proof is easy. Let $r\in R$ with $rs=1$. We have $r^d+s_{1}r^{d-1}+\dots+s_{d-1}r+s_d=0$ for some $s_i\in S$, since $r$ is integral over $S$. Multiplying through by $s^{d-1}$ yields $r\in S$.

Returning to our proof, we infer that $p$ is invertible in $\Z[1/m][x_1,\dots, x_N]$. By the algebraic independence of $x_1,\dots, x_N$, it follows that $p$ is actually invertible in $\Z[1/m]$. But only finitely many primes have this property, namely the prime factors of $m$.
\end{proof}

\begin{rem} Let $A$ be an infinite, finitely generated domain with $\mathrm{char}\:A=p>0$. 

If $n\geq 2$ then the $p$-torsion group $(A,+)$ embeds in $\GL_n(A)$, and this prevents $\GL_n(A)$ from being virtually residually $\ell$-finite for any prime $\ell\neq p$. So we cannot do any better in the positive characteristic case of Theorem~\ref{what we're actually proving}. 

Selberg's lemma fails in positive characteristic for a similar reason. The elementary group $\mathrm{E}_n(A)=\la 1_n+a\cdot e_{ij} : a\in A, i\neq j\ra$ is linear over the fraction field of $A$, and it fails to be virtually torsion-free since it contains copies of the infinite torsion group $(A,+)$. Furthermore, if $n\geq 3$ then $\mathrm{E}_n(A)$ is finitely generated. This is due to the commutator relations $[1_n+a\cdot e_{ij}, 1_n+b\cdot e_{jk}]=1_n+ab\cdot e_{ik}$ for distinct $i,j,k$, which imply that $\mathrm{E}_n(A)$ is generated by $\{1_n+a_1\cdot e_{ij},\dots, 1_n+a_N\cdot e_{ij} : i\neq j\}$ whenever $a_1, \dots, a_N$ are ring generators for $A$.  For a concrete example, take $A$ to be the polynomial ring $\mathbb{F}_p[t]$, in which case $\mathrm{E}_n(\mathbb{F}_p[t])=\SL_n(\mathbb{F}_p[t])$ since $\mathbb{F}_p[t]$ is a euclidean domain.
\end{rem}

\begin{rem}
Among finitely generated groups, we have the following implications:
\begin{align*}
\textrm{linear}\;\Rightarrow\;\textrm{virtually residually $p$-finite for some prime $p$}\;\Rightarrow\;\textrm{residually finite}
\end{align*}
The first implication, a ``$p$-adic'' refinement of Malcev's theorem, is an immediate consequence of Platonov's theorem. The second implication is Proposition~\ref{one p suffices}. Neither implication can be reversed, as witnessed by the following examples.

According to the previous remark, $\SL_n(\mathbb{F}_p[t])$ for $n\geq 3$ is finitely generated and virtually residually $\ell$-finite for $\ell=p$ only. Therefore $\SL_n(\mathbb{F}_{p}[t])\times \SL_n(\mathbb{F}_{q}[t])$, where $p$ and $q$ are different primes, is finitely generated, residually finite but not virtually residually $\ell$-finite for any prime $\ell$. 

The automorphism group of the free group on $n$ generators, $\mathrm{Aut} (F_{n})$, is virtually residually $p$-finite for all primes $p$. Indeed, as we have seen in Remark~\ref{what lurks}, free groups are residually $p$-finite for all primes $p$. Now a theorem of Lubotzky (J. Algebra 1980) says that $\mathrm{Aut} (G)$ is virtually residually $p$-finite whenever the finitely generated group $G$ is virtually residually $p$-finite. This is the ``$p$-adic'' analogue of an older, simpler, and better known theorem of G. Baumslag (J. London Math Soc. 1963) saying that $\mathrm{Aut} (G)$ is residually finite whenever the finitely generated group $G$ is residually finite. On the other hand, Formanek and Procesi (J. Algebra 1992) have shown that $\mathrm{Aut} (F_n)$ is not linear for $n\geq 3$. \end{rem}

%%%%%%%%%%%%%%%%%%%%%%

\end{document}